\def\b{{\mathbf b}}
\newtheorem{thm}{Theorem}
\newtheorem{cor}{Corollary}
\newtheorem{lem}{Lemma}
\newtheorem{prop}{Proposition}
\newcommand{\E}{\ensuremath{\mathbb E}}
\newcommand{\NN}{\mathbb{N}}
\newcommand{\Prob}{\mathbb P}
\newcommand{\Var}{\mathbb V}
\newcommand{\II}{\mathbb I}
\begin{document}
\title[Additive properties of sequences of pseudo $s$-th powers]{Additive properties\\ of sequences of pseudo $s$-th powers}

\begin{abstract}
In this paper, we study (random) sequences of pseudo $s$-th powers, as introduced by Erd\H os and R\'{e}nyi in 1960.
In 1975, Goguel proved that such a sequence is almost surely not an asymptotic basis of order $s$.
Our first result asserts that it is however almost surely a basis of order $s+\epsilon$ for any $\epsilon>0$.
We then study the $s$-fold sumset $sA=A+\cdots +A$ ($s$ times) and in particular the minimal size of an additive complement,
that is a set $B$ such that $sA+B$ contains all large enough integers. With respect to this problem, we prove quite precise
theorems which are tantamount to asserting that a threshold phenomenon occurs.
\end{abstract}

\subjclass{2000 Mathematics Subject Classification: 11B83, 11B13.}
\keywords{Additive basis, pseudo $s$-powers, probabilistic method, additive number theory}

\author[J. Cilleruelo, J.-M. Deshouillers, V. Lambert, A. Plagne]{Javier Cilleruelo, Jean-Marc Deshouillers,\\ Victor Lambert, Alain Plagne}

\address{J.C.\\ Instituto de Ciencias Matem\'aticas (CSIC-UAM-UC3M-UCM) and
Departamento de Matem\'aticas\\
Universidad Aut\'onoma de Madrid\\
28049, Madrid, Espa\~na}
\address{J.M.D.\\ Institut Math\'{e}matique de Bordeaux\\ Bordeaux INP\\33405 Talence\\ France}
\address{V.L and A.P.\\ Centre de math\' ematiques Laurent Schwartz\\ \'Ecole polytechnique\\ 91128 Palaiseau Cedex\\ France}

\email{franciscojavier.cilleruelo@uam.es}
\email{jean-marc.deshouillers@math.u-bordeaux.fr}
\email{victor.lambert@math.polytechnique.fr}
\email{plagne@math.polytechnique.fr}

\thanks{The first author was supported by grants MTM 2011-22851 of MICINN  and ICMAT Severo Ochoa project SEV-2011-0087.
The second, third and fourth author were supported by an ANR grant C\ae sar, number ANR 12 - BS01 - 0011.
All the authors are thankful to \'{E}cole Polytechnique for making possible their collaboration}

\maketitle

\section{Introduction}
In their seminal paper of 1960, Erd\H os and R\'{e}nyi \cite{ER60} proposed a probabilistic model for sequences
$A$ growing like the $s$-th powers.
Explicitly, they built a probability space $(\mathcal{U}, \mathcal{T}, \Prob )$ and a sequence of independent Bernoulli
random variables $(\xi_n)_{n \in \mathbb{N}}$ with values in $\{0, 1\}$ such that
$$
\Prob (\xi_n =1)= \frac 1s n^{-1+1/s}\quad \text{and}\quad \Prob(\xi_n =0)= 1 -\frac1s  n^{-1+1/s}.
$$
To any $u \in \mathcal{U}$, they associate the sequence of integers $A = A_u$ such that $n \in A_u$ if and only if
$\xi_n(u) = 1$. In other words, the events $\{n \in A\}$ are independent and the probability that $n$ is in $A$ is equal
to $\Prob (n \in A) = n^{-1+1/s}/s$. The counting function of these random sequences $A$  satisfies almost surely
the asymptotic relation $|A\cap [1, x]|\sim x^{1/s}$ as $x$ tends to infinity \cite{ER60} (see also \cite{L95}), whence
the terminology {\em pseudo $s$-th powers}.

In 1975, Goguel \cite{G75} proved that, almost surely, the $s$-fold sumset
$$
sA = \{ a_1+\cdots+a_s \text{ with } \ a_i\in A\}
$$
has density $1-e^{-\lambda_s}$ where
$$
\lambda_s=\frac{\Gamma^s(1/s)}{s^s\ s!}
$$
(a quantity appearing everywhere in the present study) and thus, almost surely, that $A$ is not an asymptotic basis of order $s$
(from now on, the word `asymptotic' will be omitted since there is no ambiguity).
Indeed it has been proved recently (see \cite{CD})  that the sequence $(b_n)_{n \in \mathbb{N}}$ of ordered elements in $sA$
has, almost surely, infinitely many gaps of logarithm size that is,
\begin{equation}\label{limsup}
\limsup_{n\to +\infty} \frac{b_{n+1}-b_n}{\log b_n}=\frac{1}{\lambda_s}.
\end{equation}

In contrast to the result of Goguel quoted above, Deshouillers and Iosifescu,
as a by-product of their study on the probability that an integer is not a sum of $s+1$ $s$-th powers,
proved in \cite{DI}, however,
that almost surely a sequence of pseudo $s$-th powers is a basis of order $s+1$. Here we will
make more precise this threshold-type phenomenon by using the concept of a basis of order $s+\epsilon$
introduced in \cite{Ci1}: We say that $A$ is a {\em basis of order $s+\epsilon$} if any large enough positive
integer $n$ can be written in the form
$$
n=a_1+\cdots +a_{s+1},\qquad \text{ with } a_i\in A,\qquad a_{s+1}\le n^{\epsilon}.
$$

Our first result is the fact that almost surely a sequence of pseudo $s$-th powers is a basis of order
$s+\epsilon$ for any $\epsilon>0$. Indeed we prove this result in the following stronger form.

\begin{thm}
\label{th1}
Let  $s\ge 2$ be an integer and $c > \left(\lambda_s (1- 2 \lambda_s) \right)^{-1}$.  Almost surely,
a sequence of pseudo $s$-powers $A$ has the following property:  any large enough integer $n$
can be written in the form
$$
n=a_1+\cdots+a_{s+1},\qquad  \text{ with } a_i\in A,\qquad a_{s+1}< (c\log n)^s.
$$
\end{thm}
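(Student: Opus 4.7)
By the Borel--Cantelli lemma, it suffices to prove that $\sum_n \Prob(X_n = 0) < \infty$, where $X_n$ denotes the number of representations of $n$ in the form $a + b_1 + \cdots + b_s$ with $a \in A \cap [1, M)$, $\{b_1, \ldots, b_s\}$ an unordered $s$-subset of $A \cap [M, n]$, and $M = M(n) := \lfloor (c \log n)^s \rfloor$ (so that any such representation satisfies the conclusion of the theorem). The strategy exploits the independence of $A_\ell := A \cap [1, M)$ and $A_r := A \cap [M, n]$: one conditions on $A_\ell$ and applies Janson's inequality to indicator events on the $A_r$-side.

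Conditionally on $A_\ell$, the event $\{X_n = 0\}$ is the event that none of the events $E_{a, \alpha} = \{\alpha \subseteq A_r\}$ occurs, indexed by $a \in A_\ell$ and by the $s$-subsets $\alpha$ of $[M, n]$ with $\sum \alpha = n - a$. Janson's inequality yields
\[
\Prob(X_n = 0 \mid A_\ell) \le \exp\bigl(-\mu_{A_\ell} + \Delta_{A_\ell}/2\bigr).
\]
A classical Dirichlet-integral computation---the same one underlying the definition of $\lambda_s$---gives, for each $a \in A_\ell$, $\sum_\alpha \Prob(\alpha \subseteq A_r) = (1 + o(1)) \lambda_s$ (the $o(1)$ absorbing a boundary correction of order $(M/n)^{1/s}$ coming from the restriction $b_i \ge M$), whence $\mu_{A_\ell} = (1 + o(1)) \lambda_s |A_\ell|$. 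The sum $\Delta_{A_\ell}$ over intersecting pairs splits into (i) pairs sharing $a$, which contributes the standard Poisson-correction term of size $O(n^{-1/s})$ per $a$, and (ii) pairs with $a \ne a'$ sharing a large summand $b$, whose contribution is bounded, after summing $\Prob(b \in A) = b^{-1+1/s}/s$ over $b \ge M$ and using the $O(m^{-1/s})$ estimate for the expected number of $(s-1)$-representations of $m \sim n$, by $O(n^{-1/s})$ per pair. All together, $\Delta_{A_\ell} = O\bigl((\log n)^2 n^{-1/s}\bigr) = o(1)$ on the high-probability event that $|A_\ell| = O(\log n)$ (a standard Chernoff bound handles the complementary event).

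It remains to integrate over $A_\ell$. Writing $p_a = \Prob(a \in A) = a^{-1+1/s}/s$ and using independence of the Bernoullis $(\xi_a)_{a < M}$,
\[
\E \exp\bigl(-\lambda_s |A_\ell|\bigr) = \prod_{a < M} \bigl(1 - p_a(1 - e^{-\lambda_s})\bigr) \le \exp\!\left(-(1 - e^{-\lambda_s}) \sum_{a < M} p_a\right),
\]
and $\sum_{a < M} p_a \sim M^{1/s} = c \log n$. Combining, $\Prob(X_n = 0) \le n^{-c(1 - e^{-\lambda_s}) + o(1)}$. The elementary inequality $1 - e^{-x} \ge x(1 - 2x)$ valid for $x \ge 0$ translates the hypothesis $c \lambda_s (1 - 2\lambda_s) > 1$ into $c(1 - e^{-\lambda_s}) > 1$, the series $\sum_n \Prob(X_n = 0)$ is convergent, and Borel--Cantelli concludes.

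The main technical obstacle is the uniform estimate $\Delta_{A_\ell} = o(1)$, especially in case (ii), requiring a uniform bound on the expected number of $(s-1)$-representations of integers near $n$. A secondary delicate point is controlling the boundary of the Dirichlet integral so that $\mu_{A_\ell} \sim \lambda_s |A_\ell|$ holds uniformly over $a < M$; the loss due to restricting $b_i \ge M$ is only $O((M/n)^{1/s}) = O((\log n)/n^{1/s})$, which is comfortably $o(1)$.
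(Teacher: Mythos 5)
Your proposal is correct, but it takes a genuinely different route from the paper's proof --- and in fact it proves a slightly stronger statement. The paper applies Janson's inequality directly to the family of events $E_\omega=\{\omega\subset A\}$ indexed by all admissible $(s+1)$-subsets $\omega$ with $\sigma(\omega)=n$; the dominant contribution to the correlation term then comes from pairs of representations sharing only the small element $x_{s+1}$, which contributes $(1+o(1))c\lambda_s^2\log n$, the same order of magnitude as the main term $(1+o(1))c\lambda_s\log n$, and this is precisely the source of the loss from $c\lambda_s$ to $c\lambda_s(1-2\lambda_s)$ in the final exponent. By conditioning on $A_\ell=A\cap[1,M)$, which is independent of $A_r=A\cap[M,n]$, you remove that dominant correlation altogether: pairs sharing the small summand are absorbed by the conditioning, and the residual correlations (pairs of $s$-subsets of $[M,n]$ with a common large element) are genuinely negligible, of order $n^{-1/s}\log^3 n=o(1)$ on the event $|A_\ell|\ll\log n$ (your stated per-pair bound should carry an extra factor $\log n$ coming from Lemma~\ref{t1}~(ii), but this is harmless; the needed computations are exactly those of Lemmas~\ref{delta}, \ref{BB456} and \ref{ij} of the paper). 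The price you pay is that the main term $\lambda_s|A_\ell|$ is random, and integrating $e^{-\lambda_s|A_\ell|}$ over the essentially Poissonian variable $|A_\ell|$ of mean $\sim c\log n$ converts $\lambda_s$ into $1-e^{-\lambda_s}$; since $1-e^{-x}\ge x-x^2/2>x(1-2x)$ for $x>0$, your exponent $c(1-e^{-\lambda_s})$ strictly beats the paper's $c\lambda_s(1-2\lambda_s)$, so your argument establishes the theorem under the weaker hypothesis $c>(1-e^{-\lambda_s})^{-1}$ --- closer to, though still short of, the conjectured threshold $c>\lambda_s^{-1}$ discussed at the end of Section~\ref{sec3}. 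The supporting estimates you invoke (the Dirichlet-type computation giving $\lambda_s$ uniformly for $a<M$, which is Lemma~\ref{t1}~(iii) with $g(z)=(c\log n)^s=o(z)$; the $O(z^{-1/s})$ bound on the weighted number of $(s-1)$-fold representations from Lemma~\ref{t1}~(i); and a Chernoff bound for $|A_\ell|$) are all available and are the same tools the paper uses, so the sketch fills in without difficulty.
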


We have some reason to believe that the above statement is no longer valid if $c < \lambda_s^{-1}$; 
this point will be discussed at the end of Section \ref{sec3}.  Simply notice now that $\lambda_s < 1/2$ for $s\ge 2$.

A second aim of the paper is the study of how fast an additive complement sequence of $sA$ must grow.
We first prove the following theorem.

\begin{thm}
\label{th2}
Let $s$ be an integer $s \geq 2$.
Let $B$ be a fixed sequence satisfying $$\liminf_{n\to \infty}\frac{B(n)}{\log n}> \lambda_s^{-1}.$$
Then a sequence of pseudo $s$-powers $A$ has, almost surely, the following property:
any large enough integer $n$ can be written in the form
$$
n=a_1+\cdots+a_s+b,\qquad  \text{ with distinct } a_i\in A \text{ and with } b\in B.
$$
\end{thm}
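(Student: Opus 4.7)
The plan is to apply the Borel--Cantelli lemma. Writing $R_n$ for the number of representations of $n$ of the prescribed form, it is enough to show $\sum_n \Prob(R_n = 0) < \infty$; this will force $R_n \geq 1$ for all sufficiently large $n$ almost surely. Decompose $R_n = \sum_T \mathbf{1}_{F_T}$, where $T$ ranges over tuples $(a_1 < \cdots < a_s; b)$ with $b \in B \cap [1,n]$ and $a_1 + \cdots + a_s = n-b$, and $F_T$ is the atomic event $\{a_1 \in A, \ldots, a_s \in A\}$. Since $B$ is deterministic, only the $a$-entries contribute to the randomness, and the natural tool is Janson's inequality applied to the family $\{F_T\}$.

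The expectation computation is the same Dirichlet-type integral that produces the constant $\lambda_s$: for each $b$ with $n - b \to \infty$,
$$\sum_{a_1 < \cdots < a_s,\ \sum a_i = n-b}\Prob(F_T)\;\longrightarrow\;\lambda_s.$$
Summing over $b \in B \cap [1,n]$ and using the hypothesis $\liminf B(n)/\log n > 1/\lambda_s$, one obtains $\mu := \sum_T \Prob(F_T) \geq (1+\delta)\log n$ for some $\delta > 0$ and all large $n$. Janson's inequality then gives
$$\Prob(R_n = 0) \leq \exp(-\mu + \Delta),$$
where $\Delta = \sum_{T \neq T'} \Prob(F_T \cap F_{T'})$ is summed over pairs of distinct tuples whose $a$-entries overlap. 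The whole argument reduces to proving $\Delta = o(\log n)$, since then $\Prob(R_n = 0) \leq n^{-1-\delta/2}$ for large $n$, which is summable.

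To estimate $\Delta$, I partition by $k = |\{a\text{-entries of }T\} \cap \{a\text{-entries of }T'\}| \in \{1, \ldots, s-1\}$. For a fixed shared set $S = \{c_1, \ldots, c_k\}$, the joint probability factorises as $\Prob(F_T \cap F_{T'}) = \Prob(F_S)\,\prod_{a \in T \setminus S}\Prob(a \in A)\,\prod_{a' \in T' \setminus S}\Prob(a' \in A)$, and the analogous Dirichlet estimate
$$\sum_{a_1 < \cdots < a_{s-k},\ \sum a_i = m}\,\prod_i\Prob(a_i \in A)\;=\;O\bigl(m^{-k/s}\bigr)$$
gives a decaying factor for each of the two extensions. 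Summing over $S$ and $b \in B$, and separating the bulk (where $n - b - c_1 - \cdots - c_k$ is comparable to $n$) from a small tail should deliver $\Delta = o(\log n)$.

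The main obstacle is precisely this bound on $\Delta$: one must control the sum of joint probabilities uniformly in the distribution of $B$, and the delicate case is when $B$ places many elements close to $n$, forcing the Dirichlet asymptotics to degenerate. I expect this to be handled by a crude tail estimate using that $B$ is fixed (so its counting function controls the tail directly), combined with the slack provided by the strict inequality in $\liminf B(n)/\log n > 1/\lambda_s$, which absorbs the error terms.
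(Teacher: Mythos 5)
Your outline (Borel--Cantelli, Janson's upper bound, expectation at least $(1+\delta)\log n$ via the Dirichlet asymptotics producing $\lambda_s$, correlation term $\Delta=o(\log n)$) is exactly the paper's strategy, but the step you yourself flag as ``the main obstacle'' is a genuine gap, and your proposed fix does not work. If $T$ is allowed to range over all $b\in B\cap[1,n]$, the claim $\Delta=o(\log n)$ is false in general. The hypothesis $\liminf B(n)/\log n>\lambda_s^{-1}$ is only a \emph{lower} bound on $B$, so it gives no control whatsoever on how many elements of $B$ lie near $n$: take $B=\NN$, or a $B$ containing the whole interval $[n-\log n,\,n]$ for infinitely many $n$. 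For pairs $b,b'$ of that kind the quantities $z=n-b$ and $z'=n-b'$ are small, the pair-correlation bound coming from the standard convolution lemmas is only of order $\bigl(\min(z,z')\bigr)^{-1/s}\log\bigl(\min(z,z')\bigr)$, which is not small, and summing over the many such pairs (up to $(\log n)^2$ or far more) ruins $\Delta=o(\log n)$. Neither ``the slack in the strict inequality'' nor ``a crude tail estimate'' rescues this: the slack only concerns the lower bound on the expectation, not the upper bound on the correlations, and there is no tail to estimate because $B$ near $n$ is completely unconstrained by the hypothesis.

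The missing idea is a truncation of $B$, which is the one structural device in the paper's proof. Choose $c>\lambda_s^{-1}$ with $B(n)\ge c(1+o(1))\log n$ and let $m=m(n)$ be the smallest integer with $B(m)=\bigl\lfloor\tfrac{c+\lambda_s^{-1}}{2}\log n\bigr\rfloor$; then $m=n^{(1+o(1))(1+\lambda_s^{-1}/c)/2}=o(n)$, and one counts only representations with $b\in B$, $b<m$. This keeps every $n-b\sim n$ uniformly, so that (a) each inner sum over $a_1<\cdots<a_s$ with $\sum a_i=n-b$ is $(1+o(1))\lambda_s$ and the expectation is $(1+o(1))\tfrac{c\lambda_s+1}{2}\log n$, whose coefficient exceeds $1$; and (b) every pair correlation is $\ll n^{-1/s}\log n$, whence $\Delta\ll B(m)^2\,n^{-1/s}\log n=o(1)$. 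With that modification the rest of your sketch goes through essentially as written.
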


We then prove that Theorem \ref{th2} is sharp in the sense that the constant $\lambda_s^{-1}$
intervening in this result cannot be substituted by a smaller constant.

\begin{thm}
\label{th2bis}
Let $s$ be an integer $s \geq 2$.
Let $B$ be a fixed sequence satisfying $$\liminf_{n\to \infty}\frac{B(n)}{\log n}<\lambda_s^{-1}.$$
Then a sequence of pseudo $s$-powers $A$ has, almost surely, the following property:
there are infinitely many integers $n$ that cannot be written in the form
$$
n=a_1+\cdots+a_s+b,\qquad  \text{ with distinct } a_i\in A  \text{ and with }  b\in B.
$$
\end{thm}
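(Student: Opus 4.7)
My plan is to exhibit, for every $n$ in an infinite set $\N$, a lower bound $\Prob(E_n) \geq n^{-\alpha}$ with some $\alpha < 1$, where $E_n$ denotes the event that $n$ admits no admissible representation, and then to conclude via a Kochen--Stone second-moment argument. I begin by enriching the subsequence supplied by the hypothesis: fixing $c \in (\liminf B(n)/\log n,\, \lambda_s^{-1})$ and $c' \in (c, \lambda_s^{-1})$, any $n \in [n_k^{c/c'}, n_k]$ (with $B(n_k) \leq c\log n_k$) satisfies, by monotonicity of $B$,
\[
B(n) \leq B(n_k) \leq c\log n_k = c'\log n_k^{c/c'} \leq c'\log n.
\]
Letting $\N$ be the union of these intervals, $B(n) \leq c'\log n$ throughout $\N$ and $|\N \cap [1, n_k]| = n_k(1 - o(1))$.

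Next, writing $E_n = \bigcap_{b \in B,\, b \leq n} F_{n-b}$ with $F_m = \{m \notin sA\}$, each $F_m$ is a decreasing event in the Bernoulli family $(\xi_j)$, so Harris--FKG yields $p_n := \Prob(E_n) \geq \prod_{b \in B,\, b \leq n} \Prob(F_{n-b})$. A standard Dirichlet-integral / factorial-moments computation---essentially the one behind Goguel's theorem, adapted to the distinct-summands convention in which the diagonal contributions are of lower order---gives $\Prob(F_m) = e^{-\lambda_s}(1 + o(1))$ as $m \to \infty$, so for $n \in \N$,
\[
p_n \geq \exp\bigl(-\lambda_s B(n)(1+o(1))\bigr) \geq n^{-\alpha(1+o(1))}, \qquad \alpha := c'\lambda_s < 1,
\]
and hence $\sum_{n \in \N \cap [1, n_k]} p_n \gg n_k^{1-\alpha} \to \infty$, providing the first-moment divergence required by Kochen--Stone.

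The remaining input is the matching pairwise upper bound
\[
\Prob(E_n \cap E_{n'}) \leq p_n p_{n'}(1+o(1)) \qquad (n' > n \to \infty),
\]
a tight joint Poisson approximation for the representation counts of $n-b$ and $n'-b'$ as sums of $s$ distinct elements of $A$. I would show that $s$-tuples $(a_1,\dots,a_s)$ and $(a_1',\dots,a_s')$ summing respectively to $n-b$ and $n'-b'$ and sharing a common Bernoulli contribute a vanishing share of the joint factorial moment, via a Dirichlet-integral asymptotic analogous to the one giving $\lambda_s$. FKG already supplies the matching lower bound, so this establishes asymptotic independence, and Kochen--Stone then yields $\Prob(\limsup E_n) = 1$.

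The hard part is precisely this decorrelation: because $\{\limsup E_n\}$ need not be a tail event (adding a single element to $A$ may flip $E_n$ for infinitely many $n$), Kolmogorov's $0$--$1$ law is unavailable, and so one really needs the \emph{tight} asymptotic equality in the second-moment estimate (not merely a constant-factor bound), in order for the Kochen--Stone ratio $(\sum p_n)^2 / \sum \Prob(E_n \cap E_{n'})$ to converge to $1$. Producing this joint Poisson approximation with the correct uniform error over pairs $(n, n') \in \N^2$ is what absorbs the technical bulk of the proof.
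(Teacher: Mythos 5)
Your overall strategy --- a second-moment argument restricted to a set of $n$ on which $B(n)\le c'\log n$ with $c'\lambda_s<1$ --- is the same as the paper's, which runs a windowed Chebyshev estimate on $[N_i/2,N_i]$ rather than Kochen--Stone and uses Janson's first inequality rather than Harris--FKG plus Goguel's asymptotic for the first moment; those substitutions are legitimate (modulo the minor point that $\Prob(F_{m})=e^{-\lambda_s}(1+o(1))$ only as $m\to\infty$, so the $O(1)$ many $b$ with $n-b$ below any fixed threshold must be handled by a uniform positive lower bound on $\Prob(F_m)$ --- harmless, but it deserves a line, and it is the reason the paper introduces its ``good'' integers).

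The genuine gap is in the decorrelation step, which you yourself identify as carrying the technical bulk: the uniform bound $\Prob(E_n\cap E_{n'})\le p_np_{n'}(1+o(1))$ for all $n'>n\to\infty$ is false. Whenever $n'-n=b'-b$ for some $b,b'\in B$, the events $E_n$ and $E_{n'}$ share the \emph{identical} factor $F_{n-b}=F_{n'-b'}$, and then FKG applied to the distinct factors (together with the Janson upper bound $p_n\le(1+o(1))\prod_b\Prob(F_{n-b})$) already forces $\Prob(E_n\cap E_{n'})\ge(1+o(1))\,e^{\lambda_s k}\,p_np_{n'}$, where $k\ge1$ is the number of such coincidences --- a fixed constant factor above $1$, not $1+o(1)$. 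Your picture of ``$s$-tuples sharing a common Bernoulli'' only captures overlaps between individual representations; it misses the coincidence of entire representation problems $n-b=n'-b'$. The repair is exactly the paper's $\Sigma_2$ term: split off the pairs with $n'-n\in B_N-B_N$ (at most $N|B_N|^2\ll N\log^2N$ pairs in a window of length $N$), bound each by $\min(p_n,p_{n'})$, and check that their total contribution is $\ll\log^2N\sum_np_n=o\bigl(\bigl(\sum_np_n\bigr)^2\bigr)$ because $\sum_np_n\gg N^{1-c'\lambda_s}$. Without this split the Kochen--Stone ratio cannot tend to $1$ as you require, so the argument as outlined does not close, although it is repairable along the lines just indicated.
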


In view of Theorems \ref{th2} and \ref{th2bis} it is a natural question to ask for the behaviour of those sequences $B$ with
\begin{equation}
\label{BBB}
\liminf_{n\to \infty}\frac{B(n)}{\log n}=\lambda_s^{-1}.
\end{equation}
In our final section, we will show that there are sequences satisfying \eqref{BBB} and the conclusion of Theorem \ref{th2}
while there are other sequences that satisfy \eqref{BBB} and the conclusion of Theorem \ref{th2bis}.

The paper is organized as follows. Section \ref{sec2} is composed of several lemmas which
will be useful in the proofs of the three theorems. Section \ref{sec3} contains the proof of
Theorem \ref{th1} which will be presented with precise estimates. Section \ref{sec4} contains
the proof of Theorem \ref{th2} and Section \ref{sec5} that of Theorem \ref{th2bis}. Finally, Section
\ref{sec6} contains the discussion about sequences at the threshold (that is, satisfying \eqref{BBB}).
In order to avoid overcomplicated and lengthy formulations, these proofs,
which rely on analogous but simpler principles and computation, will be written down with slightly less details.

%%%%%%%%%%%%%%%%%%%%%%%%%%%%%%%%%%%%%%%%%%%%%%%%%%%%%%

\section{Preparatory lemmas and prerequisite}
\label{sec2}

For our purpose, we shall need a few elementary or more or less classical results.
The first one is technical and we shall use the standard Vinogradov $\ll$ notation for ``less than a constant time'';
in the present paper the constants will always depend on the
parameter $s \geq 2$, but only on it. We will not recall this dependency in the $\ll$ notation.

\begin{lem}
\label{t1}
Let $s$ and $t$ be two integers such that $s\geq 2$ and $1\le t\le s-1$.  We have
\begin{itemize}
\item [(i)] for $z \geq 1$,
$$
\sum_{\substack{1 \leq x_1,\dots,x_t \\ x_1+\cdots +x_t=z}}(x_1\cdots x_t)^{-1+1/s}\ll z^{-1+t/s},
$$
\item [(ii)]  for $z \geq 2$,
$$
\sum_{\substack{1 \leq x_1,\dots,x_t \\ x_1+\cdots +x_t<z}}(x_1\cdots x_t)^{-1+1/s}\big (z-(x_1+\cdots +x_t)\big )^{-2t/s}\ll z^{-1/s}\log z,
$$
\item [(iii)] if $g$ is a positive function satisfying $g(z)=o(z)$ as $z$ tends to infinity, then
$$
\lim_{\substack{ z \to + \infty\\ z \in \NN}}\sum_{\substack{g(z)\le x_s<\cdots <x_1\\ x_1+\cdots +x_s=z}}(x_1\cdots x_s)^{-1+1/s} = s^s\lambda_s.
$$
\end{itemize}
\end{lem}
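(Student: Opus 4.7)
The plan is to prove the three parts in order, using (i) as a building block for both (ii) and (iii).

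For (i), I would proceed by induction on $t$. The base case $t=1$ is immediate since the sum reduces to the single term $z^{-1+1/s}$. For the inductive step, observe that in any tuple with $x_1+\cdots+x_t = z$ some coordinate is at most $z/t$; by the full symmetry of the integrand one may restrict to $x_t \leq z/t$ at the cost of a factor of $t$. For such $x_t$ one has $z-x_t \asymp z$, so the inductive hypothesis gives $\sum_{x_1+\cdots+x_{t-1}=z-x_t} (x_1\cdots x_{t-1})^{-1+1/s} \ll z^{-1+(t-1)/s}$, and $\sum_{x_t \leq z/t} x_t^{-1+1/s} \ll z^{1/s}$ produces the claimed $z^{-1+t/s}$.

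For (ii), I would collect terms by $w=x_1+\cdots+x_t$ and apply (i) to reduce the left-hand side to $\ll \sum_{w=1}^{z-1} w^{-1+t/s}(z-w)^{-2t/s}$. Splitting the $w$-range at $z/2$: on $w \leq z/2$ one uses $(z-w)^{-2t/s} \asymp z^{-2t/s}$ together with $\sum_{w \leq z/2} w^{-1+t/s} \ll z^{t/s}$ to get $O(z^{-t/s})$; on $w > z/2$ one uses $w^{-1+t/s} \asymp z^{-1+t/s}$, and the sum $\sum_{k=1}^{z/2} k^{-2t/s}$ (with $k=z-w$) is $O(1)$, $O(\log z)$ or $O(z^{1-2t/s})$ according as $2t>s$, $2t=s$ or $2t<s$. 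Using $1 \leq t \leq s-1$, in each of the three subcases the bound fits inside $O(z^{-1/s}\log z)$.

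For (iii), the plan is to recognise the sum as a Riemann sum for the Dirichlet integral
$$\int_{\Delta}(u_1\cdots u_s)^{-1+1/s}\,du = \Gamma(1/s)^s$$
on the open simplex $\Delta = \{u \in \mathbb R^s : u_i > 0,\ u_1+\cdots+u_s = 1\}$, after the change of variable $u_i = x_i/z$. First I would pass from the ordered sum $T(z)$ to the unordered sum $U(z) = \sum_{x_i \geq g(z),\ \sum x_i = z}\prod x_i^{-1+1/s}$: each tuple of distinct coordinates is counted $s!$ times, so it suffices to show $U(z) \to \Gamma(1/s)^s$ and that the diagonal contribution from tuples with some $x_i=x_j$ is $o(1)$ (which follows from (i) applied to the reduced sum after substituting a single variable $2y$ for an equal pair). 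Then I would fix a small $\delta > 0$ and write $U(z) = U_\delta(z) + R_\delta(z)$, where $U_\delta(z)$ is the subsum over $x_i \geq \delta z$. On the bulk $U_\delta(z)$ the scaled integrand is smooth and bounded on a compact set, and the standard Riemann-sum estimate (lattice spacing $1/z$, boundary surface $O(z^{s-2})$) gives $U_\delta(z) \to I_\delta := \int_{\Delta \cap \{u_i \geq \delta\}} \prod u_i^{-1+1/s}\,du$. For the remainder $R_\delta(z)$, single out the smallest coordinate $x \in [g(z), \delta z)$, apply (i) with $t = s-1$ to the remaining $s-1$ coordinates, and combine with $\sum_{x \leq \delta z} x^{-1+1/s} \ll (\delta z)^{1/s}$: this yields $R_\delta(z) \ll \delta^{1/s}$ uniformly in $z$. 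Letting first $z \to \infty$ and then $\delta \to 0$, with monotone convergence giving $I_\delta \to \Gamma(1/s)^s$, produces the limit $\Gamma(1/s)^s/s! = s^s\lambda_s$.

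The main obstacle will be making the Riemann-sum step in (iii) rigorous: the integrand has an integrable but non-trivial singularity along $\partial\Delta$, so one cannot pass to the limit over the whole simplex at once. The two-parameter scheme above, in which $\delta$ isolates a compact region of smoothness before being sent to $0$, is what allows both the bulk to converge cleanly and the boundary layer to be controlled uniformly in $z$ by (i). The hypothesis $g(z) = o(z)$ enters precisely here: for any fixed $\delta$ one has $g(z) < \delta z$ for all $z$ large enough, so the lower cutoff coming from the problem does not interfere with the bulk/layer decomposition.
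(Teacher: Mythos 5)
Your proof is correct, but it is genuinely more self-contained than the paper's. The authors dispose of (i), (ii) and the case $g(z)=1$ of (iii) by citing Lemma 1 of \cite{CD} (with $a_1=\cdots=a_s=1$), and only prove the reduction of general $g$ to $g=1$, namely that the subsum with $1\le x_s\le g(z)$ is $o(1)$ --- which they do exactly as you handle your boundary layer $R_\delta(z)$: isolate the smallest coordinate, apply (i) with $t=s-1$ to the rest, and sum $x_s^{-1+1/s}$ over the short range to get $O\bigl((g(z)/z)^{1/s}\bigr)$. Your induction for (i), the dyadic-type splitting at $w=z/2$ for (ii) with the three subcases $2t\gtrless s$, and the two-parameter Riemann-sum argument for (iii) (bulk $U_\delta$ on a compact piece of the simplex where the Dirichlet integrand $\prod u_i^{-1+1/s}$ is smooth, plus a layer controlled uniformly by $\delta^{1/s}$, plus an $o(1)$ diagonal correction accounting for the passage between ordered and unordered tuples) are all sound, and the identification of the limit as $\Gamma(1/s)^s/s!=s^s\lambda_s$ via the Dirichlet integral is the right computation; the authors themselves sketch essentially this Riemann-sum mechanism later, in Section \ref{sec6}, when they refine (iii) to \eqref{straightforward} with an explicit error term. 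What your route buys is independence from the external reference; what the paper's route buys is brevity. The only point worth tightening in a written version is the diagonal estimate in (iii): after substituting $2y$ for an equal pair you should note that $y^{-2+2/s}$ is summable (as $-2+2/s<-1$ for $s\ge2$) and treat the degenerate case $s=2$, where no free variables remain, separately --- both are immediate, so this is a presentational remark rather than a gap.
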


\begin{proof}
Points (i) and (ii) in this lemma appear as Lemma 1 of \cite{CD}, taking $a_1=\cdots=a_s=1$.
The special case $g(z)=1$ of (iii) appears there also.
To extend it to our setting, it will be enough to prove that
$$
\sum_{\substack{1\le x_s \le g(z)\\ 1 \leq x_{s-1}<\cdots <x_1 \\ x_1+\cdots +x_s=z}}(x_1\cdots x_s)^{-1+1/s}=o(1).
$$
To see this we use (i) with $t=s-1$ and bound this sum as
\begin{eqnarray*}
\sum_{\substack{1\le x_s\le g(z)\\ 1 \leq x_{s-1}<\cdots <x_1 \\ x_1+\cdots +x_s=z}}\hspace{-.5cm}(x_1\cdots x_s)^{-1+1/s}
& \leq &\sum_{1\le x_s <g(z)} x_s^{-1+1/s}\hspace{-.5cm}
\sum_{\substack{1 \leq x_{s-1}<\cdots <x_1\\x_1+\cdots +x_{s-1}=z-x_s}}\hspace{-.5cm}(x_1\cdots x_{s-1})^{-1+1/s}\\
&\ll &\sum_{1\le x_s<g(z)}x_s^{-1+1/s}(z-x_s)^{-1/s}\\
&\ll & (z-g(z))^{-1/s} \sum_{1\le x_s<g(z)}x_s^{-1+1/s}\\
&\ll & \left( \frac{g(z)}{z} \right)^{1/s}\\  
&=&o(1),
\end{eqnarray*}
as needed.
\end{proof}

Here are now a few more or less classical tools from probability theory. The first basic tool is
Chebychev's inequality in the following form, suitable for our purpose:
\begin{equation}
\label{cheby}
\Prob \left(X<\frac{\mathbb{E}[X]}{2}\right ) \le \frac{4\Var(X)}{\mathbb{E}[X]^2} .
\end{equation}
Here and everywhere in this paper, the symbols $\Prob, \E$ and $\Var$ denote
respectively the probability, the mathematical expectation and the variance.

The Borel-Cantelli Lemma is another basic and well known tool in probability (see for instance Lemma 8.6.1 in \cite{AS}).
We recall it here for the sake of completeness.

\begin{thm}[Borel-Cantelli Lemma]
\label{Borel}
Let $(F_i)_{i \in \mathbb{N}}$ be a sequence of events. 
If $\sum_{i=1}^{+ \infty} \Prob (F_i) < + \infty$ then,

with probability $1$, only finitely many of the events $F_i$ occur.

\end{thm}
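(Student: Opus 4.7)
The plan is to prove the Borel-Cantelli Lemma by the classical ``tail-union bound'' argument. I would translate the informal statement ``only finitely many of the $F_i$ occur'' into the measurable event
$$
F^* \;=\; \limsup_{i\to\infty} F_i \;=\; \bigcap_{n=1}^{\infty} \bigcup_{i\ge n} F_i,
$$
whose complement is exactly the set of outcomes at which only finitely many of the $F_i$ occur. The goal then becomes to show $\Prob(F^*)=0$, from which ``with probability $1$, only finitely many $F_i$ occur'' follows immediately.

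The key step is monotonicity of $\Prob$ together with countable subadditivity. For every fixed $n\in\NN$, since $F^*\subseteq\bigcup_{i\ge n}F_i$, I get
$$
\Prob(F^*) \;\le\; \Prob\Bigl(\bigcup_{i\ge n}F_i\Bigr) \;\le\; \sum_{i=n}^{\infty}\Prob(F_i).
$$
Under the hypothesis $\sum_{i\ge 1}\Prob(F_i)<+\infty$, the tail $\sum_{i\ge n}\Prob(F_i)$ tends to $0$ as $n\to\infty$. Letting $n\to\infty$ in the displayed inequality yields $\Prob(F^*)=0$, which is what we want.

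There is no real obstacle here; the only subtleties are formal. First, I must make sure $F^*$ is indeed an event, which holds because $\mathcal{T}$ is a $\sigma$-algebra and $F^*$ is obtained from the $F_i$ by a countable intersection of countable unions. Second, the step $\Prob(\bigcup_{i\ge n}F_i)\le \sum_{i\ge n}\Prob(F_i)$ is just countable subadditivity of the measure $\Prob$, so the proof is essentially a one-line computation once the right event is identified. Since the paper only invokes this result as a black box (Lemma 8.6.1 in \cite{AS}), the above two-step argument is more than sufficient as a proof sketch.
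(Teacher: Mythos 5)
Your argument is the standard and correct proof of the first Borel--Cantelli lemma: identifying the event ``infinitely many $F_i$ occur'' with $\limsup_i F_i=\bigcap_n\bigcup_{i\ge n}F_i$ and bounding its probability by the tails $\sum_{i\ge n}\Prob(F_i)\to 0$ via countable subadditivity. The paper gives no proof of its own (it simply cites the result as a black box), and your two-step argument is exactly the canonical one, so there is nothing to add.
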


Next, we will need 
two correlation inequalities due to Janson \cite{JLR} (see also \cite{BS89})
which are known as
``Janson's correlation inequalities". Up to the ordering of the
elements, this is Theorem 8.1.1 in \cite{AS}.

We shall use the following notation : if $\Omega$ is a set, then for any two subsets $\omega, \omega' $ of $\Omega$, the notation $\omega \sim \omega'$  means 		
that $\omega\ne \omega'$ and $\omega \cap \omega' \neq \emptyset$. Moreover, we use the standard notation $E^c$ for the complementary event of an event $E$.

\begin{thm}[Janson's inequalities]
\label{des1}
Let $(E_{\omega})_{\omega\in \Omega}$ be a finite collection of events indexed by subsets of a set $\Omega$
and assume that $P(E_{\omega})\le 1/2$ for any $\omega\in \Omega$.  Then the quantity $\Prob \big (\bigcap_{\omega\in \Omega}E_{\omega}^c\big )$
satisfies
\begin{itemize}
\item[(i)]  the lower bound
$$
\Prob \big (\bigcap_{\omega\in \Omega}E_{\omega}^c\big ) \ge \prod_{\omega\in \Omega}\Prob (E_{\omega}^c)
$$
and
\item[(ii)] the upper bound
$$
\Prob \big (\bigcap_{\omega\in \Omega}E_{\omega}^c\big )   \le
\Big( \prod_{\omega\in \Omega}\Prob (E_{\omega}^c) \Big) \exp \Big( 2\sum_{\substack{\omega, \omega' \in \Omega\\ \omega \sim \omega'}} \Prob(E_{\omega} \cap E_{\omega'})\Big).
$$
\end{itemize}
\end{thm}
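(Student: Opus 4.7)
The plan is to derive part~(i) from the Harris--FKG positive-association inequality and part~(ii) from a successive-conditioning argument.

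For the lower bound, I would place myself in the natural underlying product probability space in which each event $E_\omega$ depends only on the independent Bernoulli coordinates indexed by $\omega \subseteq \Omega$. In the applications of interest (and in the abstract Janson setting) each $E_\omega$ is an increasing event on a product Bernoulli space, so its complement $E_\omega^c$ is decreasing. The Harris--FKG inequality says that two decreasing events on such a space are positively correlated; iterating it one event at a time, using the fact that an intersection of decreasing events is again decreasing, yields
\[ \Prob\Big(\bigcap_{\omega\in\Omega}E_{\omega}^c\Big) \ge \prod_{\omega\in\Omega}\Prob(E_{\omega}^c), \]
which is~(i).

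For the upper bound, I would enumerate the events as $E_1,\dots,E_N$ (with supports $\omega_1,\dots,\omega_N\subseteq\Omega$) in any fixed order and expand
\[ \Prob\Big(\bigcap_{i=1}^N E_i^c\Big) = \prod_{i=1}^N \Prob\big(E_i^c\mid E_1^c,\dots,E_{i-1}^c\big). \]
I would then split the past into $D_i=\{j<i:\omega_j\sim\omega_i\}$ and $I_i=\{j<i:\omega_j\cap\omega_i=\emptyset\}$. The events indexed by $I_i$ depend on coordinates disjoint from $\omega_i$ and are therefore independent of both $E_i$ and the events indexed by $D_i$. Hence the conditioning on $\bigcap_{j\in I_i}E_j^c$ is harmless, and
\[ \Prob\big(E_i\mid E_1^c,\dots,E_{i-1}^c\big) = \Prob\Big(E_i \,\Big|\, \bigcap_{j\in D_i}E_j^c\Big). \]

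To estimate the last quantity I would apply a Bonferroni lower bound to the numerator,
\[ \Prob\Big(E_i\cap\bigcap_{j\in D_i}E_j^c\Big) \ge \Prob(E_i) - \sum_{j\in D_i}\Prob(E_i\cap E_j), \]
together with the trivial upper bound of $1$ for the denominator, and pass to complements. Combined with the standing hypothesis $\Prob(E_\omega^c)\ge 1/2$ and the elementary inequality $1+x\le e^x$, this yields
\[ \Prob\big(E_i^c\mid E_1^c,\dots,E_{i-1}^c\big) \le \Prob(E_i^c)\exp\!\Big(2\sum_{j\in D_i}\Prob(E_i\cap E_j)\Big), \]
and taking the product over $i$ delivers~(ii) (the $2$ in the exponent of the stated bound absorbs the fact that the double sum over ordered pairs $\omega\sim\omega'$ counts each unordered pair twice). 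The main technical point is recognising the conditional independence that collapses the conditioning to the dependent past events only; once this reduction is in hand, the remaining estimate is pure Bonferroni bookkeeping.
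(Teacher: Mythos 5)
The paper does not actually prove this theorem: it is quoted as a known result (Theorem 8.1.1 of Alon--Spencer, going back to Janson and to Boppana--Spencer), so you are supplying a proof where the authors give only a citation. Your part (i) is the standard Harris--FKG argument and is fine, granted the structural hypothesis (implicit in the statement, and satisfied in every application here, where $E_\omega$ is the event $\omega\subset A$) that each $E_\omega$ is an increasing event on an independent Bernoulli product space with support $\omega$.

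Part (ii), however, contains a genuine error at the key step. You claim that because the events indexed by $I_i=\{j<i:\omega_j\cap\omega_i=\emptyset\}$ have supports disjoint from $\omega_i$, they are independent of both $E_i$ and the events indexed by $D_i$, and hence that $\Prob(E_i\mid E_1^c,\dots,E_{i-1}^c)=\Prob\big(E_i\mid\bigcap_{j\in D_i}E_j^c\big)$. The first half is true ($E_i$ is independent of the $I_i$-events), but the second half is not: an event $E_j$ with $j\in D_i$ has support $\omega_j$ meeting $\omega_i$, yet $\omega_j$ may also meet the supports of events in $I_i$, so the $D_i$-events need not be independent of the $I_i$-events and the conditioning on $\bigcap_{j\in I_i}E_j^c$ cannot simply be discarded. (Three coordinates with $\omega_1=\{3\}$, $\omega_2=\{2,3\}$, $\omega_3=\{1,2\}$ already defeat the claimed identity for $i=3$: conditioning on $E_1^c$ tilts coordinate $3$, which changes what $E_2^c$ reveals about coordinate $2$, which changes the conditional law of $E_3$.) The inequality you actually need, namely $\Prob(E_i\mid E_1^c,\dots,E_{i-1}^c)\ge\Prob(E_i)-\sum_{j\in D_i}\Prob(E_i\cap E_j)$, is true, but the standard Boppana--Spencer route to it keeps the $I_i$-conditioning throughout: lower-bound the conditional probability by $\Prob\big(E_i\cap\bigcap_{j\in D_i}E_j^c\mid\bigcap_{j\in I_i}E_j^c\big)$ (enlarging the denominator), apply Bonferroni inside the conditioning, use genuine independence only to get $\Prob\big(E_i\mid\bigcap_{j\in I_i}E_j^c\big)=\Prob(E_i)$, and then invoke Harris--FKG a second time to bound each correlation term, $\Prob\big(E_i\cap E_j\mid\bigcap_{l\in I_i}E_l^c\big)\le\Prob(E_i\cap E_j)$, since $E_i\cap E_j$ is increasing and the conditioning event is decreasing. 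With that repair, the rest of your bookkeeping (the bound $1/\Prob(E_i^c)\le 2$ and the product over $i$) goes through and indeed yields a bound at least as strong as the one stated.
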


%%%%%%%%%%%%%%%%%%%%%%%%%%%%%%%%%%%%%%%%%%%%%%%%%%%%%%

\section{Proof of Theorem \ref{th1}}
\label{sec3}

Let $c > \left(\lambda_s (1- 2 \lambda_s) \right)^{-1}$, as in the statement of Theorem \ref{th1}; we recall that $\lambda_s < 1/2$ when $s\ge 2$.

We represent the sets of $s+1$ distinct elements in the form $\omega=\{ x_1,\dots,x_{s+1}\}$ with
$$
x_{s+1} <\cdots < x_1.
$$
We also denote $\sigma(\omega)=x_1+\cdots+x_{s+1}$ and, for each $n$, we let
$$
\Omega_n=\{\omega \text{ such that } \sigma(\omega)=n,\   x_{s+1}<(c\log n)^s \text{ and }    (c\log n)^s < x_s    \}.
$$

If we denote by $E_{\omega}$ the event $\omega\subset A$ and denote $\II$ the indicator 
function of an event, the function
$$
r(n,A)=\sum_{\omega\in \Omega_n}\II (E_{\omega})
$$
counts the number of representations of $n$ of the form $n=x_1+\cdots+x_{s+1}$, where
\begin{eqnarray*}
x_i\in A,\quad (c\log n)^s<x_{s} <\cdots < x_1,\quad \text{ and }\quad  x_{s+1}<(c\log n)^s.
\end{eqnarray*}

By the Borel-Cantelli Lemma, Theorem \ref{th1} will be proved as soon as we prove that the series $\Prob(r(n,A)=0)$ converges. We follow the strategy introduced in \cite{DI}. Using our definition and Janson's second correlation inequality, we have
\begin{eqnarray}\notag
\Prob(r(n,A)=0) = \Prob\big(\bigcap_{\omega \in \Omega_n} E_{\omega}^c\big) \le \prod_{\omega \in \Omega_n} \Prob\big(E_{\omega}^c\big)\times \exp\left(2\Delta_n\right),
\end{eqnarray}
with
\begin{eqnarray}\label{Delta}
 \Delta_n=\sum_{\substack{\omega, \omega'\in \Omega_n\\ \omega \sim \omega'}}\mathbb P(E_{\omega}\cap E_{\omega'}).
\end{eqnarray}

We first study the product.

\begin{lem}\label{prod}
When $n$ tends to infinity, we have
$$
\prod_{\omega \in \Omega_n} \Prob\big(E_{\omega}^c\big) = \exp\big(-(1+o(1))c\lambda_s\log n\big).
$$
\end{lem}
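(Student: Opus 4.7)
Writing $p_\omega := \Prob(E_\omega)$, independence of the Bernoulli variables $\xi_n$ gives $p_\omega = s^{-(s+1)} \prod_{i=1}^{s+1} x_i^{-1+1/s}$. Taking logarithms, the goal reduces to showing
$$\sum_{\omega \in \Omega_n} \log(1 - p_\omega) = -(1+o(1))\, c \lambda_s \log n.$$

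The first step is the uniform bound on the individual probabilities. Since $\omega \in \Omega_n$ forces $x_i \ge x_s > (c \log n)^s$ for $i \le s$, each of the factors $x_i^{-1+1/s}$ with $i\le s$ is at most $(c\log n)^{-(s-1)}$, hence $\max_{\omega \in \Omega_n} p_\omega \ll (\log n)^{-s(s-1)} \le (\log n)^{-2}$ (here I use $s \ge 2$). The Taylor expansion $\log(1-p) = -p + O(p^2)$ therefore applies uniformly and yields
$$\sum_{\omega \in \Omega_n} \log(1 - p_\omega) = -\Sigma_n + O\!\left(\sum_\omega p_\omega^2\right),\qquad \Sigma_n := \sum_{\omega \in \Omega_n} p_\omega,$$
with the trivial bound $\sum_\omega p_\omega^2 \le \max_\omega p_\omega \cdot \Sigma_n = O(\Sigma_n (\log n)^{-2})$.

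The heart of the proof is the evaluation $\Sigma_n \sim c \lambda_s \log n$. Splitting according to the smallest element $y = x_{s+1}$ gives
$$\Sigma_n = s^{-(s+1)} \sum_{1 \le y < (c \log n)^s} y^{-1+1/s}\, S_n(y),\quad S_n(y) := \sum_{\substack{(c\log n)^s < x_s < \cdots < x_1 \\ x_1 + \cdots + x_s = n - y}} (x_1 \cdots x_s)^{-1+1/s}.$$
For every such $y$, the target $z = n-y$ tends to infinity and the threshold ratio $(c\log n)^s / z$ tends to $0$ as $n\to\infty$, so Lemma \ref{t1}(iii) applies and gives $S_n(y) \to s^s \lambda_s$, uniformly in $y$. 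Combined with the elementary asymptotic $\sum_{1 \le y < Y} y^{-1+1/s} = s Y^{1/s} + O(1)$ applied at $Y = (c \log n)^s$, one obtains $\Sigma_n = s^{-(s+1)} (sc\log n + O(1))(s^s\lambda_s + o(1)) = (1+o(1))\, c \lambda_s \log n$, and hence the lemma after exponentiating.

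The main obstacle is the uniformity in $y$ at the application of Lemma \ref{t1}(iii): the lemma is stated with a single threshold function $g(z)$, whereas our threshold $(c \log n)^s$ is naturally indexed by $n$, not by $z = n - y$. I would handle this by a standard contradiction argument: if uniformity failed, there would exist $\varepsilon > 0$ and sequences $n_k \to \infty$, $y_k \in [1, (c\log n_k)^s)$ with $|S_{n_k}(y_k) - s^s\lambda_s| > \varepsilon$; extracting a subsequence along which $z_k := n_k - y_k$ is strictly increasing, one defines a function $g$ with $g(z_k) = (c\log n_k)^s$ and $g \equiv 1$ elsewhere, which satisfies $g(z) = o(z)$ and along which Lemma \ref{t1}(iii) fails, a contradiction.
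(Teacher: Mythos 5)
Your proof is correct and follows essentially the same route as the paper: split the sum $\sum_{\omega\in\Omega_n}\Prob(E_\omega)$ according to the smallest element $x_{s+1}$, apply Lemma \ref{t1}(iii) to the inner sum with $z=n-x_{s+1}\sim n$, and sum $x_{s+1}^{-1+1/s}$ over $x_{s+1}<(c\log n)^s$ to get $(1+o(1))c\lambda_s\log n$ before exponentiating. The two points you elaborate on --- the uniform smallness of $p_\omega$ justifying $\log(1-p_\omega)=-(1+o(1))p_\omega$, and the uniformity in $y$ of the application of Lemma \ref{t1}(iii) via a subsequence argument --- are exactly the details the paper passes over silently, and your treatment of them is sound.
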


\begin{proof}
We compute
\begin{eqnarray*}
\sum_{\omega\in \Omega_n}\Prob (E_{\omega}) = 
\frac 1{s^{s+1}}\sum_{1 \leq x_{s+1}<(c\log n)^s}x_{s+1}^{1/s-1}
\sum_{\substack{(c\log n)^s< x_s < \cdots < x_1 \\ x_1+\cdots +x_s=n-x_{s+1}}}(x_1\dots x_s)^{1/s-1}.
\end{eqnarray*}
For each $x_{s+1}<(c\log n)^s$, we may apply Lemma \ref{t1} (iii) with $z=n-x_{s+1} \sim n$ which gives
$$
\sum_{\omega\in \Omega_n}\Prob (E_{\omega}) =(1+o(1))\frac{\lambda_s}s\sum_{1 \leq x_{s+1}<(c\log n)^s  }x^{1/s-1}=(1+o(1))c\lambda_s\log n,
$$
and the result follows from this and the simple relation
\begin{equation}\notag
\prod_{\omega \in \Omega_n} \Prob\big(E_{\omega}^c\big) = \exp\left(\sum_{\omega \in \Omega_n} \log(1-(\Prob (E_{\omega}))\right) = \exp\left(-(1+o(1))\sum_{\omega\in \Omega_n}\Prob (E_{\omega})  \right)
\end{equation}
\end{proof}

We now come to the correlation term $\Delta_n$ defined in (\ref{Delta}).

\begin{lem}
\label{delta}
When $n$ tends to infinity, one has
$$
\Delta_n\le (1+o(1))c \lambda_s^2\log n.			
$$
\end{lem}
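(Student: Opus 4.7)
The plan is to split $\Delta_n$ according to the structure of the intersection $\omega\cap\omega'$ and to isolate a single dominant contribution. By independence of the Bernoulli variables, we have
$$\Prob(E_\omega\cap E_{\omega'})=\prod_{x\in\omega\cup\omega'}\frac{1}{s}\,x^{-1+1/s}.$$
Each $\omega\in\Omega_n$ has exactly one \emph{small} element (the $x_{s+1}<(c\log n)^s$) and $s$ \emph{large} elements. Since both $\omega$ and $\omega'$ have sum $n$, the size $t:=|\omega\cap\omega'|$ satisfies $1\le t\le s-1$: the value $t=s$ would force $\omega=\omega'$ by the sum constraint. For each $t$ I separate a \emph{small-shared} subcase (the two small elements coincide, hence lie in $\omega\cap\omega'$) from a \emph{large-only} subcase (the small elements are distinct and play no role in the intersection); no mixed configuration is possible since no large element can also be small.

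The dominant piece is $\Delta_n^{(1,\mathrm{s})}$, where $\omega$ and $\omega'$ share only their common small element $a$. Upper-bounding by dropping the disjointness of the large parts and using that each side has $s$ large elements summing to $n-a$,
$$\Delta_n^{(1,\mathrm{s})}\le\frac{1}{s^{2s+1}}\sum_{a<(c\log n)^s}a^{-1+1/s}\Biggl(\,\sum_{\substack{(c\log n)^s<y_s<\cdots<y_1\\ y_1+\cdots+y_s=n-a}}(y_1\cdots y_s)^{-1+1/s}\Biggr)^2.$$
Applying Lemma \ref{t1}(iii) with $z=n-a$ and $g(z)=(c\log n)^s=o(z)$ (uniformly in $a$), each inner sum is $(1+o(1))s^s\lambda_s$; combining with the standard asymptotic $\sum_{a<(c\log n)^s}a^{-1+1/s}\sim s\,c\log n$ one gets $\Delta_n^{(1,\mathrm{s})}\le(1+o(1))c\lambda_s^2\log n$, matching the target main term.

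It remains to show that all other contributions are $o(\log n)$. For $\Delta_n^{(1,\mathrm{l})}$, the shared element $y$ is large and the two small elements $a,a'$ are distinct; summing out the remaining $s-1$ large elements on each side via Lemma \ref{t1}(i) with $t=s-1$ produces factors $(n-a-y)^{-1/s}$ and $(n-a'-y)^{-1/s}$, and after the trivial estimate $(n-a-y)^{-1/s}(n-a'-y)^{-1/s}\le\tfrac12\bigl((n-a-y)^{-2/s}+(n-a'-y)^{-2/s}\bigr)$ the $y$-sum is handled by Lemma \ref{t1}(ii) (with $t=1$), yielding an overall $n^{-1/s}(\log n)^{3}$ bound. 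For $t\ge 2$, each additional shared element removes a full dimension of summation while only adding a benign weight $x^{-1+1/s}$; iterating the same strategy — Lemma \ref{t1}(i) for reducing free sums and Lemma \ref{t1}(ii) at the final step — delivers a similar power-saving bound in $n$ for every subcase.

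The main obstacle is the bookkeeping: in each of the finitely many subcases indexed by $t$ and by the pattern of shared small/large elements, one must identify a configuration of free variables on which Lemma \ref{t1}(ii) — the only tool yielding merely a $\log z$ loss — is applied at most once, with every other sum handled by the sharper Lemma \ref{t1}(i). Once the finitely many error contributions are summed, we conclude $\Delta_n\le(1+o(1))c\lambda_s^2\log n$ as claimed.
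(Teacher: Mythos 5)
Your proof is correct and follows essentially the same route as the paper: the same decomposition of $\Delta_n$ by intersection size and by whether the small elements coincide, the same identification of the shared-small, $|\omega\cap\omega'|=1$ configuration as the sole source of the main term $c\lambda_s^2\log n$ via Lemma \ref{t1}(iii), and the same use of Lemma \ref{t1}(i) repeatedly plus Lemma \ref{t1}(ii) once to make every other configuration $O(n^{-1/s}(\log n)^{O(1)})$. Your AM--GM symmetrization of $(n-a-y)^{-t/s}(n-a'-y)^{-t/s}$ is a minor organizational simplification over the paper's case split on whether the shared large elements sum to within $2(c\log n)^s$ of $n$, but the substance is identical.
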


\begin{proof}
In order to decompose the sum defining $\Delta_n$, we introduce
$$
\Delta_n(k)=\sum_{\substack{\omega, \omega'\in \Omega_n\\\omega\sim \omega'\in \Omega_n\\x_{s+1}=y_{s+1}\\ |\omega\cap \omega'|=k}}
\mathbb P(E_{\omega}\cap E_{\omega'})
\quad \text{ and }\quad
\Delta_n'(k)=\sum_{\substack{\omega, \omega'\in \Omega_n\\\omega\sim \omega'\in \Omega_n\\x_{s+1}\ne y_{s+1}\\ |\omega\cap \omega'|=k}}
\mathbb P(E_{\omega}\cap E_{\omega'})
$$
so that
$$
\Delta_n=\sum_{k=1}^{s-1}\Delta_n(k)+\sum_{k=1}^{s-1}\Delta'_n(k).
$$
We study each term of this formula separately and shall observe that the main contribution comes from $\Delta_n(1)$.

(i) We compute that
\begin{eqnarray*}
\Delta_n(1) & = &\frac{1}{s^{2s+1}} \sum_{\substack{(c\log n)^s < x_s <\cdots < x_1\\  (c\log n)^s < y_s <\cdots <y_1\\ x_{s+1}<(c\log n)^s\\
x_{1}+\cdots +x_{s}=y_{1}+\cdots +y_s=n-x_{s+1}\\ x_i \neq y_j \text{ for any indices } i \text{ and } j}}
(x_1\cdots x_{s+1}y_{1}\cdots y_s)^{-1+1/s} \\
		& \leq & \frac{1}{s^{2s+1}} \sum_{1 \leq x_{s+1}<(c \log n)^s}x_{s+1}^{-1+1/s} \left(\sum_{\substack{1 \leq x_s <\cdots < x_1\\
x_{1}+\cdots +x_{s}=n-x_{s+1}}}(x_{1}\cdots x_s)^{-1+1/s}\right )^2.
\end{eqnarray*}

For each $x_{s+1}<(c\log n)^s$, we may apply Lemma \ref{t1} (iii) with $z=n-x_{s+1} \sim n$ which yields
\begin{eqnarray*}
\Delta_n(1) &\le &(1+o(1))    \frac{1}{s^{2s+1}} (s^{s}\lambda_s)^2 \sum_{1 \leq x_{s+1}<(c \log n)^s} x_{s+1}^{-1+1/s}\\
&\le &(1+o(1)) c \lambda_s^2 \log n
\end{eqnarray*}
as $n$ tends to infinity.

(ii) For $2 \le k \le s-1$, we have
\begin{eqnarray*}
\Delta_n(k) & = & \frac{1}{s^{2s+2-k}}  \sum_{\substack{K,K' \subset \{1,\dots,s \} \\  |K|=|K'|=k-1}} \hspace{-1cm}
\sum_{\substack{
(c\log n)^s < x_s <\cdots < x_1\\
(c\log n)^s < y_s <\cdots <y_1\\
1 \leq x_{s+1}<(c\log n)^s\\
\sum_{i \not\in K}x_i=\sum_{i\not\in K'} y_i= n-\left( \sum_{i \in K}x_i \right)-x_{s+1}\\
x_i \neq y_j \text{ for any indices } i \not\in K \text{ and } j \not\in K'\\
\{ x_i \text{ for } i \in K \} = \{ y_i \text{ for } i \in K' \} } } \hspace{-.5cm}
\left( \left (\prod_{i=1}^{s+1} x_i\right )  \left (\prod_{i \not\in K'} y_i\right ) \right)^{-1+1/s} \\
&\ll & \sum_{\substack{
(c\log n)^s < x_s <\cdots < x_1\\
(c\log n)^s < y_s <\cdots <y_k\\
1 \leq x_{s+1}<(c\log n)^s\\
x_{k}+\cdots +x_{s}=y_{k}+\cdots +y_s=n-(x_1+\cdots + x_{k-1} + x_{s+1})}}
\left( x_1\cdots x_{s+1}y_{k}\cdots y_s \right)^{-1+1/s},
\end{eqnarray*}
after regrouping together similar terms.
Thus,
\begin{eqnarray*}
\Delta_n(k)&\ll&	\sum_{1 \leq x_{s+1}<(c \log n)^s} x_{s+1}^{-1+1/s}
			\sum_{\substack{(c\log n)^s <  x_1,\dots,x_{k-1}\\ x_1+\cdots+x_{k-1}<n-x_{s+1}}}(x_1\cdots x_{k-1})^{-1+1/s} \\
& &\hspace{2.5cm}\times
\left (\sum_{\substack{(c\log n)^s <  x_k,\dots,x_{s}\\ x_{k}+\cdots +x_{s}=n-x_1-\cdots -x_{k-1}-x_{s+1}}}(x_{k}\cdots x_s)^{-1+1/s}\right )^2.
\end{eqnarray*}
We first use Lemma \ref{t1} (i) with $z= n-x_1-\cdots -x_{k-1}-x_{s+1} \geq 1$ to bound the last term. We obtain
\begin{eqnarray*}
\Delta_n(k) &\ll & \sum_{1 \leq x_{s+1}<(c\log n)^s}x_{s+1}^{-1+1/s}\sum_{\substack{1 \leq x_1,\dots,x_{k-1}\\x_1+\cdots+x_{k-1}<n-x_{s+1}}}(x_1\cdots x_{k-1})^{-1+1/s} \\
& &\hspace{4cm} \times \big (n-x_{s+1}-(x_1+\cdots +x_{k-1})\big )^{-2(k-1)/s}
\end{eqnarray*}
and we apply now Lemma \ref{t1} (ii) with $z= n-x_{s+1}\geq 2$ which gives
\begin{eqnarray*}
\Delta_n(k)&\ll & \sum_{1 \leq x_{s+1}<(c\log n)^s}x_{s+1}^{-1+1/s}(n-x_{s+1})^{-1/s}\log (n-x_{s+1})\\
&\ll & n^{-1/s}\log n \sum_{1 \leq x_{s+1}<(c\log n)^s}x_{s+1}^{-1+1/s}\\
&\ll &  n^{-1/s}\log^2 n.
\end{eqnarray*}

(iii) Finally, for $1 \le k \le s-1$, using a similar decomposition, we obtain
$$
\Delta'_n(k)\ll\sum_{\substack{
1 \leq x_s <\cdots < x_1\\
1 \leq y_s <\cdots <y_{k+1}\\
1 \le x_{s+1},y_{k+1}<(c \log n)^s\\
x_{k+1}+\cdots +x_{s+1}=y_{k+1}+\cdots +y_{s+1}=n-(x_1+\cdots+x_{k})}}
(x_1\cdots x_{s+1}y_{k+1}\cdots y_{s+1})^{-1+1/s}.
$$

Thus,
$$
\Delta'_n(k) \ll \sum_{\substack{1 \leq x_1,\dots,x_{k}\\x_1+\cdots+x_k<n}}(x_1\cdots x_{k})^{-1+1/s}S(n;x_1,\dots,x_k)^2
$$
where
$$
S(n;x_1,\dots,x_k)=\sum_{\substack{1 \leq x_{k+1},\dots,x_s\\ 1 \leq x_{s+1}<(c\log n)^s \\ x_{k+1}+\cdots +x_{s+1}=n-(x_1+\cdots +x_{k})}}(x_{k+1}\cdots x_{s+1})^{-1+1/s}.
$$
We now study this sum and distinguish two cases.

(a) First, if $x_1+\cdots +x_k<n-2(c\log n)^s$ then
$$
S(n;x_1,\dots,x_k)=\sum_{1 \leq x_{s+1}<(c\log n)^s} x_{s+1}^{-1+1/s}
\hspace{-1cm}
\sum_{\substack{
1 \leq x_{k+1},\dots,x_s\\
x_{k+1}+\cdots +x_{s}=n-x_{s+1}-(x_1+\cdots +x_{k})}}(x_{k+1}\cdots x_{s})^{-1+1/s}
$$
which can be bounded above, using Lemma \ref{t1} (i) for each internal sum with $z=n-x_{s+1}-(x_1+\cdots +x_{k}) \geq 1$ by
\begin{eqnarray*}
\ &\ll & \sum_{1 \leq x_{s+1}<(c\log n)^s}x_{s+1}^{-1+1/s}(n-(x_1+\cdots +x_{k})-x_{s+1})^{-k/s}\\
&\ll & (n-(x_1+\cdots +x_k))^{-k/s}\log n.
\end{eqnarray*}

(b) Second, in the case $n-2(c\log n)^s\le x_1+\cdots +x_k<n$, we have 
using Lemma \ref{t1} (i) with $z=n-(x_1+\cdots +x_{k})\geq 1$,
\begin{eqnarray*}
S(n;x_1,\dots,x_k)&\le &\sum_{\substack{1 \leq x_{k+1},\dots, x_{s+1}\\x_{k+1}+\cdots +x_{s+1}=n-(x_1+\cdots+x_k)}}(x_{k+1}\cdots x_{s+1})^{-1+1/s}
\\&\ll & (n-(x_1+\cdots+x_k))^{(1-k)/s}\\
&\ll& 1.
\end{eqnarray*}

From these bounds (a) and (b) on the sums $S(n;x_1,\dots,x_k)$ we derive
\begin{eqnarray*}
\Delta'_n(k)&\ll&\sum_{\substack{1 \leq x_1,\dots,x_{k}\\x_1+\cdots+x_k<n-2(c\log n)^s}}(x_1\cdots x_{k})^{-1+1/s}S(n;x_1,\dots,x_k)^2\\
&	& \hspace{2cm} + \hspace{-.5cm}\sum_{\substack{1 \leq x_1,\dots,x_{k}\\n-2(c\log n)^s\le x_1+\cdots+x_k<n}} \hspace{-.5cm} (x_1\cdots x_{k})^{-1+1/s}S(n;x_1,\dots,x_k)^2\\
&\ll 	& \log^2 n \hspace{-.5cm}\sum_{\substack{1 \leq x_1,\dots,x_{k}\\x_1+\cdots+x_k<n-2(c\log n)^s}}(x_1\cdots x_{k})^{-1+1/s}(n-(x_1+\cdots +x_k))^{-2k/s}\\
&	&\hspace{3cm} + \sum_{n-2(c\log n)^s \le r<n   } \hspace{.2cm} \sum_{\substack{1 \leq x_1,\dots,x_{k}\\ x_1+\cdots+x_k=r}}(x_1\cdots x_{k})^{-1+1/s}\\
&\ll 	& \log^2 n \hspace{-.5cm}\sum_{\substack{1 \leq x_1,\dots,x_{k}\\x_1+\cdots+x_k<n}}(x_1\cdots x_{k})^{-1+1/s}(n-(x_1+\cdots +x_k))^{-2k/s}\\
&	&\hspace{3cm} + \sum_{n-2(c\log n)^s \le r<n   } \hspace{.2cm} r^{-1+k/s}\\
&\ll   & n^{-1/s} \log^3 n +   n^{-1+k/s}(\log n)^s\\
&\ll  & n^{-1/s} \log^{s+1} n
\end{eqnarray*}
where we use Lemma \ref{t1} (ii) applied with $t=k$ and $z=n$ in the first term and Lemma \ref{t1} (i) with $t=k$ and $z=r$ for each internal term of the second sum.

The conclusion of the lemma follows from collecting the estimates of (i), (ii) and (iii) just obtained.
\end{proof}

Gathering the results of Lemma \ref{prod} and Lemma \ref{delta}, we obtain
\begin{equation}\notag
\Prob(r(n,A)=0) \le \exp\left(-(1+o(1))c \lambda_s (1 - 2 \lambda_s)\log n\right),
\end{equation}
which is the general term of a convergent series as soon as $c \lambda_s (1 - 2 \lambda_s) > 1$; this ends the proof of Theorem \ref{th1}. As was noticed in \cite{DI}, the factor 2 occurring in Janson's inequality may be reduced to any constant larger than $1$; however, the correlation term is still of the same order of magnitude as the main term.

What about a reverse result? Janson's first correlation inequality leads to 
\begin{equation}\label{lower}
\Prob(r(n,A)=0) \ge \exp\left(-(1+o(1))c \lambda_s\log n\right),
\end{equation}
which is the general term of divergent series as soon as $c \lambda_s < 1$. A first minor point is that $r(n,A)$ only counts special representations (pairwise distinct summands and only one which is less than $(c \log n)^s$) but it is not difficult to obtain a bound like (\ref{lower}) taking into account all the representations. More seriously, to apply the "reverse" Borel-Cantelli Lemma, some independence between the events $\{r(n,A)=0\}$ is required; unfortunately, we just miss the condition given in \cite{ER59}.

%%%%%%%%%%%%%%%%%%%%%%%%%%%%%%%%%%%%%%%%%%%%%%%%%%%%%%%%%%

\section{Proof of Theorem \ref{th2}}
\label{sec4}

By assumption, there is some
$$
c>\lambda_s^{-1}
$$
such that the fixed sequence $B$ has a counting function satisfying
\begin{equation}\label{Bi}
B(n) \ge c(1+o(1))\log n.
\end{equation}
For each integer $n$, we define $m=m(n)$ to be the smallest positive integer such that
$$
B(m)=\left \lfloor \frac{c+\lambda_s^{-1}}2 \log n\right \rfloor .
$$
We observe, by \eqref{Bi} and the definition of $m$, that
$$
m=n^{(1+o(1))\frac{1+\lambda_s^{-1}/c}{2}} =o(n),
$$
which will be used through the proof.

We represent the sets of $s$ distinct elements in the form $\omega=\{x_1,\dots,x_{s}\}$ with $x_1>\cdots >x_{s}$. We also denote
 $\sigma(\omega)=x_1+\cdots+x_{s}$ and for each $n$ let
$$
\Omega_n=\{\omega \text{ such that } \sigma(\omega)=n-b \text{ for some } b\in B,\ b<m\},
$$
where

If we denote by $E_{\omega}$ the event $\omega\subset A$, then the event ``$n$ cannot be written in the form
$n=a_1+\cdots+a_s+b$ with $a_1>\dots >a_s,\ a_i\in A,\  b\in B,\ b<m$", which we denote by $F_n$, can be expressed in the form
$$
F_n=\bigcap_{\omega\in \Omega_n}E_{\omega}^c.
$$

We start with two lemmas.

\begin{lem}
\label{B}
One has
$$
\sum_{\omega\in \Omega_n}\Prob (E_{\omega}) = (1+o(1))\frac{c\lambda_s+1}2 \log n.
$$
\end{lem}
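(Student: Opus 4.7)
The plan is to decompose the sum over $\omega\in\Omega_n$ according to the value of $b\in B$ with $b<m$, apply Lemma~\ref{t1}(iii) to each inner sum (whose summation constraint depends on $b$ only through the target $n-b$), and then count how many admissible $b$'s contribute.

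First, since the sums $\sigma(\omega)=n-b$ for distinct $b$ are themselves distinct, the set $\Omega_n$ decomposes as a disjoint union over $b\in B$, $b<m$, of the $s$-subsets $\{x_1>\cdots>x_s\ge 1\}$ with $x_1+\cdots+x_s=n-b$. Because the indicator variables $\xi_n$ are independent, $\Prob(E_\omega)=s^{-s}(x_1\cdots x_s)^{-1+1/s}$, so that
\begin{equation*}
\sum_{\omega\in\Omega_n}\Prob(E_\omega)=\frac{1}{s^s}\sum_{\substack{b\in B\\ b<m}}\ \sum_{\substack{1\le x_s<\cdots<x_1\\ x_1+\cdots+x_s=n-b}}(x_1\cdots x_s)^{-1+1/s}.
\end{equation*}

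Next I would apply Lemma~\ref{t1}(iii) with $g(z)=1$ (which is $o(z)$) to evaluate each inner sum: for every $b<m$, setting $z=n-b$ yields
\begin{equation*}
\sum_{\substack{1\le x_s<\cdots<x_1\\ x_1+\cdots+x_s=n-b}}(x_1\cdots x_s)^{-1+1/s}=s^s\lambda_s+\eta(n-b),
\end{equation*}
where $\eta(z)\to 0$ as $z\to\infty$. The key observation justifying the passage to the sum over $b$ is the uniformity already observed in the excerpt: since $m=o(n)$, we have $n-b\ge n-m\to\infty$ uniformly for $b<m$, so $\max_{b<m}|\eta(n-b)|=o(1)$. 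This is the step that requires the slight care, but it follows immediately from the definition of convergence together with $m=o(n)$.

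Finally, I would count the admissible $b$'s. By the definition of $m$, we have
\begin{equation*}
|\{b\in B:b<m\}|=B(m-1)=\bigl(1+o(1)\bigr)\frac{c+\lambda_s^{-1}}{2}\log n,
\end{equation*}
differing from $\lfloor\tfrac{c+\lambda_s^{-1}}{2}\log n\rfloor$ by at most $1$ depending on whether $m\in B$. Putting the three ingredients together,
\begin{equation*}
\sum_{\omega\in\Omega_n}\Prob(E_\omega)=\frac{1}{s^s}\bigl(s^s\lambda_s+o(1)\bigr)\bigl(1+o(1)\bigr)\frac{c+\lambda_s^{-1}}{2}\log n=(1+o(1))\,\lambda_s\cdot\frac{c+\lambda_s^{-1}}{2}\log n,
\end{equation*}
and a trivial arithmetic simplification $\lambda_s\cdot\tfrac{c+\lambda_s^{-1}}{2}=\tfrac{c\lambda_s+1}{2}$ gives the claim. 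The only nonroutine point is the uniformity of the $o(1)$ in Lemma~\ref{t1}(iii), and this is handled purely by the estimate $m=o(n)$ proved right after the definition of $m$.
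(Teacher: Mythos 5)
Your proof is correct and follows essentially the same route as the paper: decompose $\Omega_n$ over $b\in B$, $b<m$, apply Lemma~\ref{t1}(iii) to each inner sum, and multiply by $B(m)\sim\frac{c+\lambda_s^{-1}}{2}\log n$. The only difference is that you spell out the uniformity of the $o(1)$ in $b$ via $m=o(n)$, which the paper leaves implicit.
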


\begin{proof}
Indeed, using Lemma \ref{t1} (iii), we compute
\begin{eqnarray*}
\sum_{\omega\in \Omega_n}\Prob (E_{\omega})&=&\sum_{b<m} \frac{1}{s^s} \sum_{\substack{1 \leq x_1<\cdots <x_s\\x_1+\cdots +x_s=n-b}}(x_1\cdots x_s)^{-1+1/s}\\
 &=&(1+o(1)) B\left(m\right) \lambda_s\\
&= &(1+o(1))\frac{c\lambda_s+1}2\log n. \end{eqnarray*}
\end{proof}

\begin{lem}
\label{BB456}
One has
$$
\sum_{\substack{\omega\sim \omega'\\ \omega,\omega'\in \Omega_n}}\Prob (E_{\omega}\cap E_{\omega'})\ll n^{-1/s}(\log n)^3.
$$
\end{lem}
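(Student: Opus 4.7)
The plan is to mirror the analysis of the correlation term $\Delta_n$ in Lemma \ref{delta}, adapted to the somewhat simpler setting of Theorem \ref{th2}: the ``external'' summand is now the deterministic element $b\in B$ rather than a small random element of $A$, so each of $\omega$ and $\omega'$ has exactly $s$ elements from $A$, and the sums over $b$ and $b'$ will be handled separately. I would first decompose the correlation sum according to $k=|\omega\cap\omega'|$, with $1\le k\le s-1$, and then further according to the pair $(b,b')\in B^2$ with $b,b'<m$, determined by $\sigma(\omega)=n-b$ and $\sigma(\omega')=n-b'$.

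For fixed $k$, $b$, $b'$, I would parametrize $\omega\cup\omega'$ as the disjoint union of $\{x_1,\dots,x_k\}$ (the common part), $\{x_{k+1},\dots,x_s\}\subset\omega\setminus\omega'$, and $\{y_{k+1},\dots,y_s\}\subset\omega'\setminus\omega$. Dropping the ordering and distinctness constraints only costs a constant factor in the upper bound. Writing $X=x_1+\cdots+x_k$, the two internal sums over $\{x_{k+1},\dots,x_s\}$ and $\{y_{k+1},\dots,y_s\}$ are, by Lemma \ref{t1}(i) applied with $t=s-k$, bounded respectively by $(n-b-X)^{-k/s}$ and $(n-b'-X)^{-k/s}$.

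To combine these into a form directly amenable to Lemma \ref{t1}(ii), I would use the elementary inequality
$$(n-b-X)(n-b'-X)\ge\big(n-\max(b,b')-X\big)^2,$$
which holds whenever both factors on the left are positive (the case here, since $X\le n-\max(b,b')-(s-k)$). The remaining sum over $x_1,\dots,x_k$ then fits Lemma \ref{t1}(ii) with $t=k$ and $z=n-\max(b,b')$; since $b,b'<m=o(n)$ we have $z\asymp n$, so this sum is $\ll n^{-1/s}\log n$. Finally, summing over the $\ll B(m)^2\ll (\log n)^2$ pairs $(b,b')$ and over the finitely many values of $k$ yields the desired bound $n^{-1/s}(\log n)^3$.

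The only delicate step is the uniform combination of $(n-b-X)^{-k/s}$ and $(n-b'-X)^{-k/s}$ into the single factor $(n-\max(b,b')-X)^{-2k/s}$; the simple inequality above avoids the case split (e.g.\ between $b=b'$ and $b\ne b'$, or between $X$ small and $X$ close to $n$) that makes the analogous part of the proof of Lemma \ref{delta}(iii) substantially longer.
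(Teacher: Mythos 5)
Your proof is correct and follows essentially the same route as the paper: decompose by the intersection size $k$ and the pair $(b,b')$, bound the two internal sums via Lemma \ref{t1}(i), merge the two factors into $(n-\max(b,b')-X)^{-2k/s}$ (the paper does the same thing by ordering $b\le b'$ and bounding both by $(n-b'-X)^{-k/s}$), apply Lemma \ref{t1}(ii), and sum over the $\ll B(m)^2$ pairs. The case split you mention avoiding occurs only in the proof of Lemma \ref{delta}, not in the paper's proof of this lemma, so there is no real divergence.
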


\begin{proof}
We can write
$$
\sum_{\substack{\omega\sim \omega'\\ \omega,\omega'\in \Omega_n}}\Prob (E_{\omega}\cap E_{\omega'})=
\sum_{\substack{b \le  b'<m\\ b,b'\in B}}\sum_{k=1}^{s-1}\Delta_n(k;b,b')
$$
where
$$
\Delta_n(k;b,b')=\sum_{\substack{\omega,\omega'\in \Omega_n\\\sigma(\omega)=n-b,\ \sigma(\omega')=n-b'\\|\omega\cap \omega'|=k}}\Prob (E_{\omega}\cap E_{\omega'}).
$$
Thus,
\begin{eqnarray*}
\Delta_n(k;b,b') &\ll & \hspace{-.3cm} \sum_{\substack{1 \leq x_1<\cdots <x_k\\x_1+\dots +x_k<n-b'}}
						\hspace{-.5cm} (x_1\cdots x_k)^{-1+1/s}
				\left( \sum_{\substack{x_{k+1},\dots ,x_s\\ x_{k+1}+\cdots +x_s=n-b-(x_1+\cdots +x_k)}}(x_{k+1}\cdots x_s)^{-1+1/s}\right ) \\
				& &\hspace{3.5cm} \times \left ( \sum_{\substack{y_{k+1},\dots,y_s\\y_{k+1}+\cdots +y_s=n-b'-(x_1+\cdots+x_k)}}(y_{k+1}\cdots y_s)^{-1+1/s}\right).
\end{eqnarray*}
But Lemma \ref{t1} (i) gives, for $\zeta =b$ or $b'$,
\begin{eqnarray*}
\sum_{\substack{1 \leq x_{k+1},\dots ,x_s\\x_{k+1}+\cdots +x_s=n-\zeta-(x_1+\cdots +x_k)}}(x_{k+1}\cdots x_s)^{-1+1/s}&\ll &(n-\zeta-(x_1+\cdots+x_k))^{-k/s}\\
&\ll & (n-b'-(x_1+\cdots+x_k))^{-k/s}
\end{eqnarray*}
and applying this bound and later  Lemma \ref{t1} (ii) we obtain
\begin{eqnarray*}
\Delta_n(k;b,b') &\ll & \hspace{-.3cm} \sum_{\substack{1 \leq x_1<\cdots <x_k\\x_1+\dots +x_k<n-b'}}
						\hspace{-.5cm} (x_1\cdots x_k)^{-1+1/s}(n-b'-(x_1+\cdots+x_k))^{-2k/s}\\
&\ll & (n-b')^{-1/s}\log(n-b').				
\end{eqnarray*}

Adding all the contributions, it follows
$$
\sum_{\substack{\omega\sim \omega'\\ \omega,\omega'\in \Omega_n}}
\Prob (E_{\omega}\cap E_{\omega'})\ll \sum_{\substack{b \le b'<m\\ b,b'\in B}}(n-b')^{-1/s}\log(n-b')\ll B(m)^2 n^{-1/s}\log n.
$$
and using $B(m) \ll \log n$ concludes the proof of the lemma.
\end{proof}

We now come to the very proof of the Theorem.
By Janson's second inequality (Theorem \ref{des1} (ii)) we obtain the following upper bound for $\Prob (F_n)$, namely
$$
\Prob (F_n)\le\prod_{\omega\in \Omega_n}\left (1-\Prob (E_{\omega})\right )
\exp\left (2\sum_{\substack{\omega\sim \omega'\\ \omega,\omega'\in \Omega_n}} \Prob (E_{\omega}\cap E_{\omega'})\right )
$$
which, using the inequality $\log(1-x)<-x$ (valid for $x>0$) yields
\begin{equation}
\label{log}
\log \Prob (F_n) \le -\sum_{\omega\in \Omega_n}\Prob (E_{\omega})+2\sum_{\substack{\omega\sim \omega'\\ \omega,\omega'\in \Omega_n}}\Prob (E_{\omega}\cap E_{\omega'}).
\end{equation}

Plugging in \eqref{log} the estimates obtained in Lemmas \ref{B} and \ref{BB456} we get
$$
\log \Prob (F_n)\le - (1+o(1))\frac{c\lambda_s+1}2\log n,
$$
so that
$$
\Prob (F_n)\le n^{-(1+o(1))\frac{c\lambda_s+1}2}.
$$
If $c>\lambda_s^{-1}$ then $(c\lambda_s+1)/2>1$ and the sum $\sum_n \Prob (F_n)$ is finite. The Borel-Cantelli Lemma implies
that, almost surely, only a finite number of events $F_n$ can occur and we are done.

%%%%%%%%%%%%%%%%%%%%%%%%%%%%%%%%%%%%%%%%%%%%%%%%%%%%%%%%%%

\section{Proof of Theorem \ref{th2bis}}
\label{sec5}

We use the same kind of notation as in the proof of Theorem \ref{th2} but now
$$
\Omega_n=\{\omega \text{ such that }  \sigma(\omega)=n-b \text{ for some } b\in B\}.
$$

We define the event $F_n$: ``$n$ cannot be written in the form $n=x_1+\cdots +x_s+b$ with $x_1,\dots,x_s\in A$,
$x_s<\cdots <x_1$ and $b\in B$."
In other words,
$$
F_n=\bigcap_{\omega\in \Omega_n}E_{\omega}^c.
$$

The hypothesis of Theorem \ref{th2bis} is tantamount to writing
$$
\liminf_{n \to + \infty} \frac{B(n)}{\log n}=c
$$
for some $c<\lambda_s^{-1}.$ Then
there exists a sequence $\left(N_i\right)_{i\in\mathbb{N}}$ of integers such that
\begin{equation}\label{N}B(N_i)=c(1+o(1))\log N_i.\end{equation}

In all this proof, if $N$ is some integer, we shall say that a positive integer $n$ is {\em good (for N)} if
$N/2\le n\le N$ and
$$
|n-b|>(\log N)^{4s}
$$
for all $b\in B$. In the opposite case, $n$ will be said to be {\em bad (for N)}.

We consider the random variable (recall $\II$ is the indicator function of an event)
$$
X_N=\sum_{\substack{N/2\le n\le N\\ n\text{ is good}}} \II (F_n).
$$
We use the notation
$$
\mu_N=\E(X_N)\quad \text{ and }\quad \sigma_N^2=\Var (X_N).
$$
Our strategy is to prove that
\begin{equation}
\label{mmu}
\lim_{i \to + \infty} \mu_{N_i}= + \infty
\end{equation}
and that
\begin{equation}
\label{var}
\sigma_{N_i}^2\ll \frac{\mu_{N_i}^2}{\log N_i}.
\end{equation}
Then, using Chebychev's inequality in the form \eqref{cheby}, we get
\begin{equation}
\label{che}
\Prob\left(X_{N_i}<\frac{\mu_{N_i}}{2}\right)<\frac{4\sigma_{N_i}^2}{\mu_{N_i}^2}\ll \frac 1{\log N_i}.
\end{equation}
Now, Theorem \ref{th2bis} follows immediately from \eqref{che} and \eqref{mmu}.

From now on, we let $N$ be a term of the sequence $\left(N_i\right)_{i\in\mathbb{N}}$.

\subsection{Estimate of $\mu_N$}
\begin{prop}\label{mmmu}
We have $$\mu_N\ge N^{(1-c\lambda_s)(1+o(1))}.$$
\end{prop}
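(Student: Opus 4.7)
The plan is to apply Janson's \emph{first} correlation inequality (Theorem \ref{des1}(i)) to obtain a lower bound on each $\Prob(F_n)$, then to sum these lower bounds over good $n\in[N/2,N]$. Since $F_n=\bigcap_{\omega\in\Omega_n} E_\omega^c$, Janson gives
$$\Prob(F_n)\ge\prod_{\omega\in\Omega_n}\bigl(1-\Prob(E_\omega)\bigr).$$
For any $\omega\in\Omega_n$ attached to good $n$, the largest element satisfies $x_1\ge(n-b)/s\ge(\log N)^{4s}/s$, so $\Prob(E_\omega)\le s^{-1}x_1^{-1+1/s}=o(1)$ uniformly in $\omega$. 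This both ensures the Janson hypothesis $\Prob(E_\omega)\le 1/2$ for $N$ large and justifies the estimate $\log(1-\Prob(E_\omega))=-(1+o(1))\Prob(E_\omega)$, uniformly; hence
$$\Prob(F_n)\ge\exp\Bigl(-(1+o(1))\sum_{\omega\in\Omega_n}\Prob(E_\omega)\Bigr).$$

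To control the internal sum I would proceed exactly as in Lemma \ref{B}. Splitting according to $b\in B$ with $b<n$, each inner sum equals $\frac{1}{s^s}\sum_{1\le x_s<\cdots<x_1,\,x_1+\cdots+x_s=n-b}(x_1\cdots x_s)^{-1+1/s}$. Because $n$ is good, $n-b>(\log N)^{4s}\to\infty$ \emph{uniformly} in every $b\in B\cap[1,n)$, so Lemma \ref{t1}(iii) (with $g\equiv 1$) yields each inner sum as $(1+o(1))\lambda_s$ with the same $o(1)$. Summing in $b$ and using \eqref{N} together with the monotonicity of $B$,
$$\sum_{\omega\in\Omega_n}\Prob(E_\omega)\le(1+o(1))\lambda_s B(N)=(c\lambda_s+o(1))\log N,$$
whence $\Prob(F_n)\ge N^{-c\lambda_s+o(1)}$ uniformly over the good $n\in[N/2,N]$.

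It remains to verify that most of $[N/2,N]$ is good. A bad $n$ lies in an interval of length $2(\log N)^{4s}+1$ around some $b\in B\cap[N/2-(\log N)^{4s},N+(\log N)^{4s}]$; the trivial inequality $B(N+(\log N)^{4s})\le B(N)+(\log N)^{4s}\ll(\log N)^{4s}$ (valid because $B$ is integer-valued and nondecreasing) then gives at most $O((\log N)^{8s})=o(N)$ bad integers in $[N/2,N]$. So at least $N/4$ good integers remain for $N$ large, and summing the pointwise bound gives
$$\mu_N\ge\tfrac{N}{4}\cdot N^{-c\lambda_s+o(1)}=N^{(1-c\lambda_s)(1+o(1))},$$
as required (using $c\lambda_s<1$). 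The only delicate point is ensuring that the convergence in Lemma \ref{t1}(iii) is \emph{uniform} over all the $b$'s contributing to $\Omega_n$ for a given good $n$; this is precisely what the threshold $(\log N)^{4s}$ in the definition of goodness is engineered to guarantee.
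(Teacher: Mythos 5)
Your proof is correct and follows the same route as the paper: Janson's first inequality for a pointwise lower bound on $\Prob(F_n)$, Lemma \ref{t1}(iii) together with $B(N_i)=c(1+o(1))\log N_i$ to evaluate $\sum_{\omega\in\Omega_n}\Prob(E_\omega)$, and a count of the bad integers in $[N/2,N]$. The one genuine difference is how the quadratic error in $\log(1-x)$ is handled: the paper explicitly bounds $\sum_{\omega\in\Omega_n}\Prob(E_\omega)^2\ll(\log N)^{-6}$ via a separate computation, whereas you observe that $\Prob(E_\omega)\le s^{-s}x_1^{-1+1/s}=o(1)$ uniformly (since $x_1\ge(n-b)/s\gg(\log N)^{4s}$ for good $n$), so that $\log(1-\Prob(E_\omega))=-(1+o(1))\Prob(E_\omega)$ termwise with a uniform $o(1)$; this is a legitimate and slightly more economical way to reach the same bound $\Prob(F_n)\ge N^{-c\lambda_s(1+o(1))}$. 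Your count of bad integers ($O((\log N)^{8s})$) is cruder than the paper's $O((\log N)^{4s+1})$ but amply sufficient, and you correctly flag the uniformity of Lemma \ref{t1}(iii) over the values $z=n-b\ge(\log N)^{4s}$ as the point the goodness condition is designed to secure.
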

\begin{proof}
We have
\begin{eqnarray}\label{F}
\mu_N  %=\E(X_N)
&=&\sum_{\substack{N/2\le n\le N\\ n\text{ good}}}\Prob(F_n).
\end{eqnarray}

Let $n$ be a good integer for $N$. Using Janson's first inequality (Theorem \ref{des1} (i)) we observe that
$$
\Prob (F_n)\ge \prod_{\omega\in \Omega_n}\left(1-\Prob (E_{\omega}) \right ).
$$
Using that $\log (1-x)=-x+O(x^2)$ we have
\begin{equation}\label{log}
\log(\Prob (F_n))\ge -\sum_{\omega\in \Omega_n}\Prob (E_{\omega})+O\left (\sum_{\omega\in \Omega_n}\Prob (E_{\omega})^2 \right ).
\end{equation}
On the one hand, since $n$ is good, we compute
\begin{eqnarray}\label{PEomega}
\sum_{\omega\in \Omega_n}\Prob (E_{\omega})&=& \frac 1{s^s} \sum_{\substack{b<n-(\log N)^{4s}\\b\in B}} \hspace{.3cm}
									\sum_{\substack{1 \leq x_s< \cdots <x_1\\x_1+\cdots+x_s=n-b}}(x_1\cdots x_s)^{-1+1/s}\\
	&=& \frac 1{s^s} \sum_{\substack{b<n-(\log N)^{4s} \\ b\in B}} 
		s^s\lambda_s(1+o(1))\nonumber \\ &\le & \lambda_s(1+o(1))B(N)\nonumber \\ &\le & c\lambda_s (1+o(1))\log N.	\nonumber	
\end{eqnarray}

On the other hand,
\begin{eqnarray*}
\sum_{\omega\in \Omega_n}\Prob (E_{\omega})^2& = &\sum_{\substack{b<n-(\log N)^{4s}\\ b\in B}}  \hspace{.3cm}
			\left( \frac 1{s^s} \right)^2	 \sum_{\substack{1 \leq x_s< \cdots <x_1\\ x_1+\cdots+x_s=n-b}}(x_1\cdots x_s)^{-2+2/s}\\
	&\ll & \sum_{\substack{b<n-(\log N)^{4s}\\ b\in B}}(n-b)^{-2+2/s} \sum_{\substack{1 \leq x_s< \cdots <x_2\\ x_2+\cdots+x_s\leq n-b}}(x_2\cdots x_s)^{-2+2/s}
\end{eqnarray*}
by noticing that $x_1 \geq (n-b)/s$ in each term of the internal sum. We further compute, $n$ being good,
\begin{eqnarray*}	
\sum_{\omega\in \Omega_n}\Prob (E_{\omega})^2	&\ll & \sum_{\substack{b<n-(\log N)^{4s}\\ b\in B}}(n-b)^{-2+2/s}\left (\sum_{x=1}^{n-b}x^{-2+2/s}\right )^{s-1}\\
	&\ll &\sum_{\substack{b<n\\ b\in B}}(\log^{4s} N)^{-2+2/s}\left (\sum_{x=1}^{n-b}x^{-1}\right )^{s-1}\\
	&\ll &\sum_{\substack{b<n\\ b\in B}}(\log N)^{-8s+8} (\log N)^{s-1}\\
	&\ll &(\log N)^{-7s+7} B(N)\\
	&\ll &(\log N)^{-6}.
\end{eqnarray*}
Thus, \eqref{log} and \eqref{PEomega} imply that
\begin{equation}
\label{Fn}
\Prob (F_n)\ge N^{-c\lambda_s(1+o(1))}
\end{equation}
when $n$ is good.

One computes that
\begin{eqnarray*}
|\{N/2 \le n\le N:\  n \text{ bad} \}|&  = & |\{N/2\le n\le N :\ |n-b|<(\log N)^{4s} \text{ for some } b\in B\}|\\
					& \le &\sum_{b<N} 2(\log N)^{4s}\\
					& \ll & (\log N)^{4s+1}.
\end{eqnarray*}

Thus, using equations \eqref{F}, \eqref{Fn} and this, we obtain
\begin{eqnarray*}
\mu_N&=&\sum_{\substack{N/2 \le n\le N\\ n\text{ good}}}N^{-c\lambda_s(1+o(1))}\\
 &\ge&  \sum_{\substack{N/2 \le n\le N}} N^{-c\lambda_s(1+o(1))}-  \sum_{\substack{N/2 \le n\le N\\ n\text{ bad}}} N^{-c\lambda_s(1+o(1))}\\
 & \ge &   N^{ (1+o(1))(1-c\lambda_s) } -O\left ((\log N)^{4s+1}\right )\\
 & \ge &   N^{ (1+o(1))(1-c\lambda_s) }
\end{eqnarray*}
since $1-c\lambda_s>0$.
\end{proof}

\subsection{Estimate of $\sigma_N^2$}
Let us recall now that, given a set $B$, its {\em difference set} $B-B$ is defined by
$$
B-B = \{ b-b' \text{ with } b,b' \in B \}.
$$

 \begin{lem}
 \label{BB}
 Let $B_N=\{b\le N \text{ with }\ b\in B\}$. Let $n<m\le N$ be two positive integers such that $m-n\not \in B_{N}-B_{N}$ then
 $$
 \Prob (F_n\cap F_m)\le \Prob (F_n) \Prob (F_m)
 \exp\left (2\sum_{\substack{\omega,\omega'\in \Omega_n\cup \Omega_m\\ \omega\sim \omega'}}
 \Prob (E_{\omega}\cap E_{\omega'})\right ).
 $$
 \end{lem}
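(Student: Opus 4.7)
The plan is to apply Janson's two inequalities in tandem, the key observation being that the hypothesis $m-n\notin B_N-B_N$ forces $\Omega_n$ and $\Omega_m$ to be disjoint.

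First I would check the disjointness. Suppose $\omega\in\Omega_n\cap\Omega_m$. Then there exist $b,b'\in B$ with $\sigma(\omega)=n-b=m-b'$, so $m-n=b-b'$. Since the $x_i$ are positive integers we have $\sigma(\omega)\ge 1$, hence $b<n\le N$ and $b'<m\le N$, so $b,b'\in B_N$. But then $m-n\in B_N-B_N$, contrary to our hypothesis. Therefore $\Omega_n\cap\Omega_m=\emptyset$.

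Next, write
$$
F_n\cap F_m=\bigcap_{\omega\in\Omega_n\cup\Omega_m}E_\omega^c.
$$
The hypothesis of Theorem \ref{des1} is satisfied because $\Prob(E_\omega)=s^{-s}\prod_i x_i^{-1+1/s}\le s^{-s}\le 1/4$ for $s\ge 2$. Applying Janson's second inequality (Theorem \ref{des1} (ii)) to the collection $(E_\omega)_{\omega\in\Omega_n\cup\Omega_m}$ yields
$$
\Prob(F_n\cap F_m)\le\Bigl(\prod_{\omega\in\Omega_n\cup\Omega_m}\Prob(E_\omega^c)\Bigr)\exp\Bigl(2\sum_{\substack{\omega,\omega'\in\Omega_n\cup\Omega_m\\ \omega\sim\omega'}}\Prob(E_\omega\cap E_{\omega'})\Bigr).
$$

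Finally, by the disjointness established in the first step, the product factors:
$$
\prod_{\omega\in\Omega_n\cup\Omega_m}\Prob(E_\omega^c)=\Bigl(\prod_{\omega\in\Omega_n}\Prob(E_\omega^c)\Bigr)\Bigl(\prod_{\omega\in\Omega_m}\Prob(E_\omega^c)\Bigr).
$$
Janson's first inequality (Theorem \ref{des1} (i)) applied separately to $\Omega_n$ and to $\Omega_m$ gives $\prod_{\omega\in\Omega_n}\Prob(E_\omega^c)\le\Prob(F_n)$ and likewise for $m$. Combining these bounds yields the statement of the lemma. No real obstacle arises: the only point requiring care is noticing that the hypothesis on $m-n$ is exactly what is needed to separate $\Omega_n$ from $\Omega_m$ so that the product of ``independent'' lower bounds from Janson~I can be recognised as $\Prob(F_n)\Prob(F_m)$.
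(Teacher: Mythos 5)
Your proof is correct and follows essentially the same route as the paper: disjointness of $\Omega_n$ and $\Omega_m$ from the hypothesis $m-n\notin B_N-B_N$, Janson's second inequality on $\Omega_n\cup\Omega_m$, factoring the product, and Janson's first inequality applied separately to $\Omega_n$ and $\Omega_m$. The extra details you supply (the explicit disjointness check and the verification that $\Prob(E_\omega)\le 1/2$) are accurate and only make the argument more complete.
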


 \begin{proof}
 We observe that
 $$
 F_n\cap F_m=\bigcap_{\omega\in \Omega_n\cup \Omega_m}E_{\omega}^c
 $$
 and that the condition $m-n\not \in B_N-B_N$ implies that $\Omega_n\cap \Omega_m=\emptyset$.  Janson's second inequality
 (Theorem \ref{des1} (ii)) applied to $\Omega =\Omega_n\cup \Omega_m$) implies that
 \begin{eqnarray*}
 \Prob(F_n\cap F_m)&\le &
 \prod_{\omega\in \Omega_n\cup \Omega_m}\Prob (E_{\omega}^c)
 \exp\left (2\sum_{{\substack{\omega,\omega'\in \Omega_n\cup \Omega_m\\ \omega\sim \omega'}}}\Prob(E_{\omega}\cap E_{\omega'})\right )\\
 & = & \prod_{\omega\in \Omega_n}\Prob (E_{\omega}^c)
 \prod_{\omega\in \Omega_m}\Prob(E_{\omega}^c)
 \exp\left (2\sum_{\substack{\omega,\omega'\in \Omega_n\cup \Omega_m\\ \omega\sim \omega'}}\Prob (E_{\omega}\cap E_{\omega'})\right )\\
 &\le & \Prob (F_n) \Prob (F_m)   \exp\left (2\sum_{ \substack{\omega,\omega'\in \Omega_n\cup \Omega_m\\ \omega\sim \omega'} }\Prob(E_{\omega}\cap E_{\omega'})\right )
 \end{eqnarray*}
 using Janson's first inequality (Theorem \ref{des1} (i)) applied to $\Omega_n$ and to $\Omega_m$.
 The lemma is proved.
 \end{proof}

\begin{lem}
\label{ij}
Let $N,n,m$ be integers. 
If $n$ and $m$  are good for $N$, then
$$
\sum_{\substack{\omega\in \Omega_n,\omega'\in \Omega_m\\ \omega\sim \omega'}}
\Prob(E_{\omega}\cap E_{\omega'})\ll \frac{1}{\log N}.
$$
\end{lem}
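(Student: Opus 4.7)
The strategy mirrors Lemma \ref{BB456} with two adaptations: we now carry two distinct shifts $b, b' \in B$, one per index $n$ and $m$, and the goodness of $n$ and $m$ provides the uniform lower bound $n - b,\, m - b' \ge (\log N)^{4s}$ for every admissible pair.

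I would first decompose the sum according to $k = |\omega \cap \omega'| \in \{1, \ldots, s-1\}$ and to the pair $(b, b') \in B^2$ satisfying $\sigma(\omega) = n-b$ and $\sigma(\omega') = m-b'$. After absorbing the $O(1)$ positional choices for the $k$ common elements and writing them as $x_1 > \cdots > x_k$ with $r = x_1+\cdots+x_k$, the remaining packets $\{x_{k+1},\ldots,x_s\}$ (in $\omega$) and $\{y_{k+1},\ldots,y_s\}$ (in $\omega'$) become free, and Lemma \ref{t1}~(i) bounds the two corresponding inner sums by $(n-b-r)^{-k/s}$ and $(m-b'-r)^{-k/s}$ respectively.

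Separating these two factors by the elementary inequality $uv \le \tfrac12(u^2+v^2)$ and applying Lemma \ref{t1}~(ii) with $z = n-b$ to one piece and with $z = m-b'$ to the other, one obtains, for fixed $k$, $b$, $b'$, a subsum of order
$$
(n-b)^{-1/s}\log(n-b) + (m-b')^{-1/s}\log(m-b').
$$
Because $n$ and $m$ are good for $N$, each term on the right is at most $(\log N)^{-4}\cdot \log N = (\log N)^{-3}$. Summing over $b, b' \in B \cap [1,N]$ (at most $B(N)^2$ pairs), over $k = 1, \ldots, s-1$, and using $B(N) \ll \log N$ on the subsequence $(N_i)$ relevant to Theorem \ref{th2bis} (see \eqref{N}), the total is $\ll (\log N)^{-3}\cdot(\log N)^2 = (\log N)^{-1}$, as desired.

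The only non-mechanical step is the AM--GM splitting needed to reach the hypothesis of Lemma \ref{t1}~(ii), which requires a \emph{single} squared weight $(z - \sum x_i)^{-2t/s}$ rather than a product of two distinct such factors; once this is handled, everything else parallels the treatment of $\Delta_n(k)$ in Lemma \ref{BB456}.
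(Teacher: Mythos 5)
Your proof is correct and follows essentially the same route as the paper: the same decomposition into $\Delta_{n,m}(k;b,b')$, two applications of Lemma \ref{t1}~(i), a reduction to a single squared weight so that Lemma \ref{t1}~(ii) applies, and the goodness bound $n-b,\,m-b'\ge(\log N)^{4s}$ together with $B(N)\ll\log N$ along the sequence $(N_i)$. The only (harmless) difference is at the reduction step: you use $uv\le\frac12(u^2+v^2)$ and invoke Lemma \ref{t1}~(ii) twice, whereas the paper assumes without loss of generality that $n-b\le m-b'$ and bounds $(m-b'-r)^{-k/s}\le(n-b-r)^{-k/s}$, invoking Lemma \ref{t1}~(ii) once.
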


\begin{proof}
We can write
$$
\sum_{\substack{\omega\in \Omega_n,\omega'\in \Omega_m\\ \omega\sim \omega'}}
\Prob(E_{\omega}\cap E_{\omega'})=\sum_{\substack{1 \leq b<n\\1 \leq b'<m\\b,b' \in B}}\hspace{.2cm}\sum_{k=1}^{s-1}\Delta_{n,m}(k;b,b')
$$
where, for $k \geq 1$,
$$
\Delta_{n,m}(k;b,b')=\sum_{\substack{\omega\in \Omega_n,\omega'\in \Omega_m\\
\sigma(\omega)=n-b,\ \sigma(\omega')=m-b'\\|\omega\cap \omega'|=k}}P(E_{\omega}\cap E_{\omega'}).$$
Assume that $n-b\le m-b'$. Thus,
\begin{eqnarray*}
\Delta_{n,m}(k;b,b')& \ll & \hspace{-0.5cm}\sum_{\substack{1 \leq x_1,\dots,x_k\\ x_1+\dots +x_k<n-b}} (x_1\cdots x_k)^{-1+1/s}
\left (\sum_{\substack{1 \leq x_{k+1},\dots,x_s\\x_{k+1}+\cdots +x_s=n-b-(x_1+\cdots+x_k)}}(x_{k+1}\cdots x_s)^{-1+1/s}\right )\\
& &\hspace{3.5cm}\times \left ( \sum_{\substack{1 \leq y_{k+1},\dots,y_s\\y_{k+1}+\cdots +y_s=m-b'-(x_1+\cdots+x_k)}}(y_{k+1}\cdots y_s)^{-1+1/s}\right).
\end{eqnarray*}
Lemma \ref{t1} (i) applied twice
shows that

\begin{eqnarray*}
\Delta_{n,m}(k;b,b')&\ll &\hspace{-0.5cm}\sum_{\substack{1 \leq x_1,\dots,x_k\\x_1+\dots +x_k<n-b}}\hspace{-0.5cm}(x_1\cdots x_k)^{-1+\frac 1s}(n-b-(x_1+\cdots+x_k)   )^{-\frac ks}(m-b'-(x_1+\cdots+x_k)   )^{-\frac ks}\\
&\ll &\sum_{\substack{1 \leq x_1,\dots,x_k\\x_1+\dots +x_k<n-b}}\hspace{-0.5cm}(x_1\cdots x_k)^{-1+1/s}(n-b-(x_1+\cdots+x_k)   )^{-2k/s}\\ &\ll& (n-b)^{-1/s}\log (n-b)\\
&\ll &\frac{1}{\log^3 N}.
\end{eqnarray*}
since $(\log N)^{4s} \le n-b \le N$.

If  $m-b'<n-b$ we proceed in the same way.
 Thus,
\begin{eqnarray*}
\sum_{\substack{\omega\in \Omega_n,\omega'\in \Omega_m\\ \omega\sim \omega'}}\Prob (E_{\omega}\cap E_{\omega'})&\ll&
\sum_{\substack{1 \leq b<n\\ b \in B}}\sum_{\substack{1 \leq b'<m\\ b' \in B}} \frac{1}{\log^3 N}\\
&\ll &\frac{(B(N))^2}{\log^3 N}\\
&\ll & \frac{1}{\log N},
\end{eqnarray*}

hence the result.
\end{proof}

\begin{cor}
\label{c1}
Let $N,n,m$ be integers. If $n$ and $m$ 
are good for $N$ and $m-n\not\in B_N-B_N$ then
$$
\Prob (F_n\cap F_m)-\Prob (F_n)\Prob (F_m)\ll \frac{1}{\log N} \Prob (F_n)\Prob (F_m).
$$
\end{cor}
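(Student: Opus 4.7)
\textbf{Plan for the proof of Corollary \ref{c1}.}

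The plan is to apply Lemma \ref{BB} and show that the exponential factor appearing there is $1 + O(1/\log N)$. Since $m - n \notin B_N - B_N$ and both $n, m \le N$, Lemma \ref{BB} yields
$$
\Prob(F_n \cap F_m) \le \Prob(F_n)\Prob(F_m)\exp\Bigl(2\sum_{\substack{\omega,\omega'\in \Omega_n\cup\Omega_m \\ \omega \sim \omega'}} \Prob(E_\omega \cap E_{\omega'})\Bigr).
$$
So it suffices to show the sum in the exponent is $O(1/\log N)$, after which expanding $e^x = 1 + O(x)$ for small $x$ and subtracting $\Prob(F_n)\Prob(F_m)$ gives exactly the claimed inequality.

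The sum naturally splits into three pieces according to whether the pair $(\omega,\omega')$ lies entirely in $\Omega_n$, entirely in $\Omega_m$, or has one element in each. The two diagonal pieces are bounded exactly as in Lemma \ref{BB456}: that lemma gives each of them $\ll n^{-1/s}(\log n)^3$ and $\ll m^{-1/s}(\log m)^3$, which, since $N/2 \le n, m \le N$ (both are good for $N$), is certainly $O(1/\log N)$ and in fact much smaller. The cross piece, where $\omega \in \Omega_n$ and $\omega' \in \Omega_m$ with $\omega \cap \omega' \neq \emptyset$, is exactly the quantity bounded by Lemma \ref{ij}, which delivers precisely $O(1/\log N)$. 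Adding the three contributions gives the desired bound on the exponent.

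The only step requiring any care is making sure Lemma \ref{BB456}, which was stated and used in the setting of Theorem \ref{th2}, still applies verbatim here; but its proof uses only the facts that $B(N) \ll \log N$ and that $\Omega_n$ consists of decompositions $n-b$ with $b\in B$, both of which remain valid in the present setting. No step looks like a serious obstacle: the corollary is essentially a packaging of Lemmas \ref{BB}, \ref{BB456} and \ref{ij}, with the observation that $\exp(O(1/\log N)) - 1 \ll 1/\log N$.
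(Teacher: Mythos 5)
Your proposal follows essentially the same route as the paper: apply Lemma \ref{BB}, split the exponent over $\Omega_n\cup\Omega_m$ into the two diagonal sums and the cross sum, show the total is $O(1/\log N)$, and conclude with $e^x-1\sim x$. The one inaccuracy is your treatment of the diagonal pieces: Lemma \ref{BB456} does \emph{not} apply verbatim here, because its proof uses not only $B(N)\ll\log N$ but also that the $b$ occurring in $\Omega_n$ satisfy $b<m(n)=o(n)$, which is what makes $(n-b')^{-1/s}\ll n^{-1/s}$; in the present section $\Omega_n$ admits every $b\in B$ with $b<n$, and the goodness of $n$ only guarantees $n-b>(\log N)^{4s}$, so the diagonal sums are merely $O\bigl((\log N)^{-2}\log\log N\bigr)$ rather than $O(n^{-1/s}\log^3 n)$. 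This costs you nothing, since $O(1/\log N)$ is all that is needed, and the clean way to get it is the paper's: apply Lemma \ref{ij} to all three sums, taking $n=m$ for the diagonal ones.
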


\begin{proof}
Lemma \ref{BB} implies that
$$
\Prob(F_n\cap F_m)-\Prob (F_n)\Prob (F_m)\le
\Prob (F_n) \Prob (F_m) \left (\exp\left (2\sum_{\substack{\omega,\omega'\in \Omega_n\cup \Omega_m\\ \omega \sim \omega'}}
\Prob (E_{\omega}\cap E_{\omega'})\right )-1\right ).
$$
We observe that
\begin{eqnarray*}
\sum_{\substack{\omega,\omega'\in \Omega_n\cup \Omega_m\\ \omega \sim \omega'}}
\Prob (E_{\omega}\cap E_{\omega'})&=&\sum_{\substack{\omega,\omega'\in \Omega_n\\ \omega \sim \omega'}}\Prob (E_{\omega}\cap E_{\omega'})
								+\sum_{\substack{\omega,\omega'\in \Omega_m\\ \omega \sim \omega'}}\Prob (E_{\omega}\cap E_{\omega'})\\
&&  \hspace{3cm}+  \sum_{\substack{\omega\in \Omega_n,\omega'\in \Omega_m\\ \omega \sim \omega'}}\Prob (E_{\omega}\cap E_{\omega'}).
\end{eqnarray*}
We finish the proof applying Lemma \ref{ij} to the three sums (with $n=m$ or not) and using  the estimate $e^x-1\sim x$ when $x$ approaches $0$.
\end{proof}

\begin{prop}
\label{lemfinal}
The following estimate holds
$$\sigma_N^2\ll \frac{\mu_N^2}{\log N}.$$
\end{prop}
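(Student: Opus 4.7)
The plan is to bound $\sigma_N^2$ via the usual expansion
\[
\sigma_N^2 = \sum_{n} \Var(\II(F_n)) + 2\sum_{n<m} \bigl(\Prob(F_n \cap F_m) - \Prob(F_n)\Prob(F_m)\bigr),
\]
with both sums restricted to good integers $n,m \in [N/2, N]$. The diagonal contribution is trivially at most $\sum_n \Prob(F_n) = \mu_N$, and since Proposition \ref{mmmu} supplies a polynomial lower bound $\mu_N \ge N^{(1-c\lambda_s)(1+o(1))}$, this part is negligible compared to $\mu_N^2/\log N$.

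For the off-diagonal sum, I would split pairs $(n,m)$ according to whether $m-n$ belongs to the difference set $B_N-B_N$ or not. When $m-n \notin B_N-B_N$, Corollary \ref{c1} is exactly tailored to the situation: each covariance is bounded by $\frac{1}{\log N}\Prob(F_n)\Prob(F_m)$, so summing over all such pairs produces a contribution $\ll \mu_N^2/\log N$, which matches the desired estimate up to a constant.

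The remaining ``bad'' pairs, those with $m-n \in B_N-B_N$, are handled by the trivial bound $\Prob(F_n\cap F_m) \le \Prob(F_n)$. For each fixed positive $d \in B_N-B_N$, the sum of $\Prob(F_n)$ over $n \in [N/2,N]$ with $n$ and $n+d$ both good is at most $\mu_N$; since $|B_N-B_N| \le B(N)^2 \ll (\log N)^2$, these pairs together contribute $O((\log N)^2 \mu_N)$. The polynomial lower bound on $\mu_N$ makes $(\log N)^3 = o(\mu_N)$, so this too is absorbed into $\mu_N^2/\log N$.

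The main obstacle is more conceptual than computational: one must recognize that the genuinely correlated events, those not decoupled by Corollary \ref{c1}, live on a sparse set of differences parametrized by $B_N - B_N$ and can be controlled by a crude counting argument. Once this dichotomy is set up, Corollary \ref{c1} handles the generic pairs, the trivial bound plus the $(\log N)^2$-size of $B_N - B_N$ handles the rest, and the polynomial growth of $\mu_N$ guaranteed by Proposition \ref{mmmu} carries everything home.
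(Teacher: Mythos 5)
Your proof is correct and follows essentially the same route as the paper: the same variance expansion, the same dichotomy on whether $m-n$ lies in $B_N-B_N$, Corollary \ref{c1} for the generic pairs, the trivial bound plus the $O(\log^2 N)$ count of $B_N-B_N$ for the exceptional pairs, and the polynomial lower bound on $\mu_N$ from Proposition \ref{mmmu} to absorb the $\log^2 N\,\mu_N$ and diagonal terms. No gaps.
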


\begin{proof}
A standard calculation shows that
$$\sigma_N^2=2\sum_{\substack{N/2 \leq n <m\le N\\ n,m\text{ good}}} \Big( \Prob (F_n\cap F_m)-\Prob (F_n)\Prob (F_m) \Big)+\sum_{\substack{N/2 \leq n \le N\\ n\text{ good}}} \Big( \Prob (F_n)-\Prob^2 (F_n) \Big).$$
We decompose
$$\sigma_N^2=2\Sigma_1+2\Sigma_2+\Sigma_3,$$
where
\begin{eqnarray*}
\Sigma_1&= &\sum_{\substack{N/2<n <m\le N\\ n-m\not \in B_N-B_N\\ n,m\text{ good}}} \Big(\Prob (F_n\cap F_m)-\Prob (F_n)\Prob (F_m )\Big),\\
\Sigma_2&=&\sum_{\substack{N/2 \leq n< m\le N\\ n-m\in B_N-B_N\\ n,m\text{ good}}} \Big( \Prob (F_n\cap F_m)-\Prob (F_n)\Prob (F_m)\Big),\\
\Sigma_3&=&  \sum_{\substack{N/2 \leq n \le N\\ n\text{ good}}} \Big( \Prob (F_n)-\Prob^2 (F_n) \Big).
\end{eqnarray*}

It is clear that $$\Sigma_3\le \mu_N.$$

To bound $\Sigma_2$  from above, we use the trivial upper bound
$$
\Prob (F_n\cap F_m)-\Prob (F_n)\Prob(F_m)\le \Prob(F_m)
$$
and get,
 for $\Sigma_2$,
\begin{eqnarray*}
\Sigma_2  & \le & \sum_{\substack{N/2\leq m<N\\  m \text{ good}}}\Prob (F_m)|\{N/2\leq n \leq N \text{ such that } n\in B_N-B_N+m\}| \\
		& \le & |B_N - B_N| \sum_{\substack{N/2\leq m<N\\  m \text{ good}}}\Prob (F_m)\\
		& \ll & |B_N |^2 \sum_{\substack{N/2\leq m<N\\  m \text{ good}}}\Prob (F_m)\\
		& \ll & \log^2 N\ \mu_N.
\end{eqnarray*}
Finally, by Corollary \ref{c1}, we have
$$
\Sigma_1\ll \frac{1}{\log N}\sum_{\substack{N/2<n <m\le N\\ n-m\not \in B_N-B_N\\ n,m\text{ good}}}\Prob (F_n)\Prob (F_m)
\leq \frac{1}{\log N} \Big( \sum_{\substack{N/2 \leq n \le N\\ n \text{ good}}} \Prob (F_n) \Big)^2
= \frac{\mu_N^2}{\log N}.
$$
Adding the three contributions $\Sigma_1,\Sigma_2$ and $\Sigma_3$ we have
\begin{equation}
\label{sisi}
\sigma_N^2\ll \frac{\mu_N^2}{\log N}+\log^2 N\ \mu_N+\mu_N\ll \mu_N^2\left(\frac 1{\log N}+\frac{\log^2N}{\mu_N}\right ).
\end{equation}

We let
$$
\varepsilon=\frac{1-c\lambda_s}2 >0
$$
and notice that Proposition \ref{mmmu} implies
\begin{equation}
\label{Znn}
\mu_N \geq N^{2 \varepsilon + o(1)}\gg \log^3 N.
\end{equation}
We obtain the Proposition after plugging \eqref{Znn} in the last term of \eqref{sisi}.
\end{proof}

\section{The limit case of Theorems \ref{th2} and \ref{th2bis}:\\ Sequences at the threshold}
\label{sec6}

Theorems  \ref{th2} and  \ref{th2bis} being proved, it is natural to wonder what happens for sequences $B$ at the
threshold, namely satisfying
$$
\liminf_{n\to \infty}\frac{B(n)}{\log n}=\lambda_s^{-1}.
$$
In this paragraph, we show how to build sequences at the threshold satisfying either the conclusion of Theorem \ref{th2}
or of Theorem \ref{th2bis}.

Indeed, consider for example the sequence $B$ defined by the counting function
$$
B(n)=\left \lfloor \lambda_s^{-1}\log n+2\lambda_s^{-1}\log \log n\right \rfloor.
$$
We can mimic the proof of Theorem \ref{th2} (although we have to change $m=n/2$ now).

We'll use the following refinement of Lemma 1, (iii)
\begin{equation}\label{straightforward}
\sum_{\substack{1\le x_s<\cdots <x_1\\ x_1+\cdots +x_s=n}}(x_1\cdots x_s)^{-1+1/s} = s^s\lambda_s+O(n^{-1/(s+1)}).
\end{equation}
Hint: we let $g(n) = n^{1/(s+1)}$, break the sum over $x_s$ at $g(n)$. In the sum with $x_s \ge g(n)$ we recognize (up to the right gamma factor) a Riemann sum for the integral $\idotsint (t_1\ldots t_s)^{-1+1/s} dt_1\ldots dt_s$ over the part of the hyperplane $t_1+\cdots +t_s=1$ limited by $ g(n)/n <t_s<\cdots <t_1 \le 1$; the error in the approximation of the integral by the Riemann sum is $O(1/g(n))$; the error in the truncation of the sum (cf. the proof of part (iii) of Lemma 1) is $O((g(n)/n)^{1/s})$ and so is the error in the truncation of the integral. The resulting global error is $O(n^{1/(s+1)})$, which is enough for our purpose. By looking carefully at what occurs around 0 and integrating the error in the approximation, one can reduce the error term to $O(n^{-1/s})$.

Equation (\ref{straightforward}) leads to
\begin{eqnarray*}\sum_{\omega\in \Omega_n}\Prob(E_{\omega})&=&\frac 1{s^s}\sum_{b<n/2}\sum_{\substack{1\le x_s<\cdots <x_1\\ x_1+\cdots +x_s=n}}(x_1\cdots x_s)^{-1+1/s}\\ &=&B(n/2)\left (\lambda_s +O(n^{-1/(s+1)})\right )\\
&=& \log n+2\log \log n+O(1).\end{eqnarray*}
Following the same reasoning as in Theorem \ref{th2}  we get $$\Prob(F_n)\le e^{-(\log n+2\log \log n+O(1))  }\ll \frac 1{n\log^2n}.$$
Thus, $\sum_n \Prob (F_n)<\infty$ and we can apply the Borel Cantelli Lemma to conclude that the sequence $B$ is almost surely  complementary sequence of a pseudo $s$-th power.

Conversely, consider for example a sequence $B$ defined by the counting function
$$
B(n)=\left \lfloor \lambda_s^{-1}\log n-t(n)\right \rfloor,
$$
where $t(n)$ is an increasing function with $t(n)=o(\log n)$.
We can mimic the proof of Theorem \ref{th2bis} with the only difference  that now the exponent
$2\epsilon+o(1)$ in \eqref{Znn} is $2\epsilon_N\sim \lambda_st(N)/\log N$. So, we can take for $t(n)$
any function such that $\mu_N\gg N^{\epsilon_N}\gg \log^3 N$. For example, the choice
$$
t(N)=4\lambda_s^{-1}\frac{\log \log N}{\log N}
$$
is satisfactory.

\end{document}